\newtheorem{cor}{Corollary}
\begin{document}

\title{Properties of stepwise irregular graphs}
%\subtitle{Do you have a subtitle?\\ If so, write it here}

%\titlerunning{Short form of title}        % if too long for running head

\author{Somnath Bera         \and
        Prithwineel Paul %etc.
}

%\authorrunning{Short form of author list} % if too long for running head

\institute{Somnath Bera \at
              School of Automation, Huazhong University of Science and technology, \\ Wuhan 430074, Hubei, China.
              \email{somnathbera89@gmail.com}           %  \\
%             \emph{Present address:} of F. Author  %  if needed
           \and
           Prithwineel Paul \at
              Electronics and Communication Sciences Unit, Indian Statistical Institute, \\ Kolkata 700108, India.
              \email{prithwineelpaul@gmail.com}
}

\date{Received: date / Accepted: date}
% The correct dates will be entered by the editor

\maketitle

\begin{abstract}
Stepwise irregular (SI) graphs were introduced by Ivan Gutman recently in 2018 
and
%SI graphs 
%are those graphs in which  
in these graphs the difference between the degrees of any two adjacent vertices is exactly one. In this work, we show %that there exist 
the existence of connected bicyclic SI graphs of order $5$ and $9$ which has been claimed to be non-existent by Gutman. We also show
the existence of 
SI graphs of different order and cyclomatic numbers when there exist a SI graph with a vertex of degree $1$ or $2$. We give a characterization on the order of the connected tricyclic SI graphs.  
At the end, we investigate some properties of the SI graphs under elementary graph operations along with the %upper and 
lower and upper bound of the irregularity of SI graphs.
\keywords{Graph irregularity \and Stepwise irregular graph \and 
	Line graph \and Total graph
	\and Albertson index}
% \PACS{PACS code1 \and PACS code2 \and more}
% \subclass{MSC code1 \and MSC code2 \and more}
\end{abstract}

\section{Introduction}

%A graph is regular if all its vertices are of same degree, otherwise it is irregular. Irregularity of graphs has many applications. It has wide importance for analyzing topological  structures of  deterministic and random networks in chemistry, bio-informatics and social networks~\cite{Reginocentrality,estradabiomolecular}. There are several articles~\cite{dimitovmaximalirregularity,liuhighlyirregular,Gutmanextremalgraph,henningbipartiteirregular,Zofiadegreesequence} in literature to investigate the irregularity measure of a graph. 

%This work is motivated by the recent work done by Ivan Gutman~\cite{SIgraph}.

Let %$G$ as an ordered pair
$G= G(V,E)$ be a connected simple graph with $|V(G)|=n$ 
% $V$ 
% is the set
( $n$ number of vertices) and  
$|E(G)|=m$ ($m$ number of edges).
%denotes the set of edges connecting adjacent vertices. Let $|V(G)|=n$ and $|E(G)|=m$. 
The order of a graph $G$ is the number of vertices in $G$.
The neighbourhood of a vertex $v$ in $G$ is denoted by $nbd(v)$ and it contains %is the set of 
all the  vertices adjacent to $v$. %and 
Its cardinality is called the degree of the vertex $v$. 
The degree of a vertex $v$ in $G$ is often represented by
$d_{G}(v)$. The maximum and minimum degree of the vertices in $G$ is denoted by $\Delta(G)$ and $\delta(G)$ respectively. 
%is denoted by $d_G(v)$ (or simply $d(v)$). 
%The degree sequence of a graph $G$ is the sequence of degrees of all the vertices in $G$ in non increasing order and it is denoted by $d(G)$. 
%Also, $P_n$ denotes 
A path of length $(n-1)$ with $n$ vertices is denoted by $P_{n}$.

A graph is regular if all its vertices are of same degree, otherwise it is irregular. To observe the irregularity of a given graph has been an important direction of research.   There are several articles~\cite{dimitovmaximalirregularity,liuhighlyirregular,Gutmanextremalgraph,henningbipartiteirregular,Zofiadegreesequence} in literature to investigate the irregularity measures of a graph.
Also, irregularity of graphs has 
%many applications.
%It has wide importance
widely been used for analyzing topological  structures of  deterministic and random networks occurring in chemistry, bio-informatics and social networks~\cite{Reginocentrality,estradabiomolecular}.
%In this work, we consider connected simple unweighted graph with no multiple edges. 

If $u$ and $v$ are two adjacent vertices in $G$, then the edge between $u$ and $v$ is denoted by $uv$.  
The imbalance of an edge $uv \in E(G)$, defined by Albertson~\cite{Albertson}, is $|d_G(u)-d_G(v)|$ and the irregularity of $G$, which is the most popular and most frequently used graph index, is %given by 
$irr(G)= \sum\limits_{uv \in E(G)} |d_G(u)-d_G(v)|$. 

%In~\cite{Abdoetal}, another measure of irregularity, 
%irregularity measure,  called  the total irregularity of a graph $G$, was defined as $irr_t(G)= \frac{1}{2} \sum\limits_{u, v \in V(G)} |d_G(u)-d_G(v)|$.

Among many degree based topological graph indices, two popular indices are %such as 
the first Zagreb index $ M_1(G)$ and the second Zagreb index~\cite{Gutmansecondzagrebindices,dosliczagrebindices,fath-tabarzagrebindices,Gutmanfirstzagrebindices} $M_2(G)$ where
%are as follows:

$$ M_1(G)=\sum\limits_{v \in V(G)} d_G(v)^2 \mbox{ and }  M_2(G)=\sum\limits_{uv \in E(G)} d_G(u)d_G(v). $$

There are another multiplicative  versions of Zagreb indices such as 
$$ \Pi_{1}(G)= \prod\limits_{v \in V(G)}d_G(v)^2 \mbox{ and } \Pi_2(G)= \prod\limits_{uv \in E(G)} d_G(u)d_G(v).  $$

The cyclomatic number of a connected graph $G$ with order $n$ and $m$ edges is $\gamma (G)= m-n+1$. The graphs with cyclomatic number $\gamma = 0, 1, 2, 3, 4$ are called tree, unicyclic, bicyclic, tricyclic and tetracyclic graphs respectively.

In this work, we investigate the properties of the stepwise irregular graphs introduced by Ivan Gutman~\cite{SIgraph}. We show the existence of stepwise irregular graphs of order $5$ and $9$. The existence of different stepwise irregular graphs and cyclomatic numbers also have been shown when the precondition of existence of stepwise irregular graph with at least one vertex of degree $1$ or $2$ is satisfied. Moreover, a characterization on the order of the vertices of a tricyclic graph has been provided using the previous results. We further have shown that every integer in the interval $[\delta(G), \Delta(G)]$ must be 
degree of some vertex in $G.$ 
Next we  show that the stepwise irregular graphs are not closed under edge deletion, vertex deletion and complementation. The subdivision, line and total graph of a SI graph is also not SI but the conditions under which these graphs becomes SI have been investigated. The paper ends with the lower and upper bound of the irregularity of stepwise irregular graphs. 

%In this paper we 
The paper is arranged as follows: At first we recall existing results on SI graphs in Section~\ref{Basic results}. Section~\ref{Main Results} investigates several properties of SI graphs and its existence.  At the end, we study the properties of the SI graphs under elementary graph operations, 
%the line graph, total graph and subdivision graph for SI graphs 
along with the bounds of the irregularity of the SI graphs in section \ref{SI graph under elementary graph operations}.

\section{Basic results}\label{Basic results}
Followings are some important results on stepwise irregular graphs:
\begin{lemma} \cite{SIgraph}
	The number of edges of a stepwise irregular graph is even.
\end{lemma}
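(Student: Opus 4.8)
The plan is to exploit the parity structure that the defining condition forces on the degree sequence. First I would observe that if $uv \in E(G)$ then $|d_G(u) - d_G(v)| = 1$, so of the two numbers $d_G(u)$ and $d_G(v)$ exactly one is even and the other is odd. Consequently, if we set $A = \{\, v \in V(G) : d_G(v) \text{ is even} \,\}$ and $B = V(G) \setminus A$, then every edge of $G$ has one endpoint in $A$ and one in $B$; in other words $G$ has no edge with both endpoints in $A$ and no edge with both endpoints in $B$. This is the one genuine idea in the argument.

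Next I would count the edges by summing degrees over $A$. Since no edge of $G$ lies inside $A$, each edge incident to a vertex of $A$ has its other endpoint in $B$ and is therefore counted exactly once in the sum $\sum_{v \in A} d_G(v)$; hence $m = \sum_{v \in A} d_G(v)$. Every term of this sum is even by the definition of $A$, so $m$ is even. (If $A = \emptyset$ then $G$ has no edges, and $m = 0$ is even, so the statement holds trivially in that degenerate case.) One could equally well sum over $B$ and use that $m = \sum_{v \in B} d_G(v)$ is a sum of $|B|$ odd integers, together with the handshake lemma which guarantees that $|B|$, the number of odd-degree vertices, is even.

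I do not expect a real obstacle here: the only point requiring a moment's care is justifying that the equal-difference-of-exactly-one condition genuinely forces the parities to alternate along every edge, so that $(A,B)$ behaves like the sides of a bipartition for the purpose of the degree-counting identity $m = \sum_{v \in A} d_G(v)$. Once that is in place, the conclusion is immediate from the fact that a finite sum of even integers is even.
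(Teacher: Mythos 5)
Your proof is correct: since adjacent vertices in a stepwise irregular graph have degrees differing by exactly one, every edge joins an even-degree vertex to an odd-degree vertex, so counting edges by summing degrees over the even-degree class gives $m=\sum_{v\in A}d_G(v)$, a sum of even numbers. The paper itself states this lemma without proof (it is quoted from Gutman's original article), and your parity-bipartition argument is essentially the standard one used there, so there is nothing to correct.
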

\begin{lemma} \cite{SIgraph}
	Stepwise irregular graphs are bipartite.
\end{lemma}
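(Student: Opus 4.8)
The plan is to exhibit, for an arbitrary stepwise irregular graph $G$, an explicit partition of $V(G)$ into two independent sets; since a graph is bipartite exactly when such a partition exists, this suffices. The partition I would use is forced by the hypothesis: put $V_0=\{v\in V(G):d_G(v)\text{ is even}\}$ and $V_1=\{v\in V(G):d_G(v)\text{ is odd}\}$, so that $V(G)=V_0\cup V_1$ is a disjoint union (every degree is a non-negative integer, so its parity is well defined).

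First I would invoke the defining property of a stepwise irregular graph: every edge $uv\in E(G)$ satisfies $|d_G(u)-d_G(v)|=1$. Two integers differing by exactly one have opposite parity, so for each edge $uv$ one endpoint lies in $V_0$ and the other in $V_1$; in particular no edge joins two vertices of $V_0$ or two vertices of $V_1$. Hence $V_0$ and $V_1$ are independent sets and $(V_0,V_1)$ is a bipartition of $G$, so $G$ is bipartite. Equivalently, one can phrase the same computation through odd cycles: around any cycle $v_1v_2\cdots v_kv_1$ the stepwise irregular condition makes each step $d_G(v_{i+1})-d_G(v_i)$ equal to $\pm 1$ (indices modulo $k$), and these $k$ unit steps must sum to $0$; this is impossible when $k$ is odd, so $G$ has no odd cycle and is therefore bipartite.

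I do not expect a genuine obstacle in this argument — it is essentially the observation that \emph{adjacent vertices differ in degree by one} is the same as \emph{adjacent vertices have degrees of opposite parity}, which is precisely a proper $2$-colouring of $G$ by degree parity. The only points worth a word of care are that one of the classes $V_0, V_1$ may be empty (harmless, since bipartiteness constrains only the edges, not the classes) and that nothing in the argument uses connectivity, so it applies verbatim to every stepwise irregular graph.
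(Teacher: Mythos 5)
Your proof is correct: the parity-of-degrees $2$-colouring (equivalently, the no-odd-cycle argument) is exactly the standard argument for this lemma, which the paper itself only cites from Gutman's original work without reproducing a proof. Nothing further is needed.
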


\begin{lemma}\cite{SIgraph} \label{adding 4 vertices having same cyclomatic number}
	Let $G_0$ be an SI graph of order $n_0$ and cyclomatic number $\gamma $, possessing a vertex of degree $1$. Then for all $k = 1, 2, . . . $,
	there exist SI graphs of order $n_0 + 4k$ and cyclomatic number $\gamma $.
\end{lemma}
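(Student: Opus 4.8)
The statement to prove is Lemma~\ref{adding 4 vertices having same cyclomatic number}: if $G_0$ is an SI graph of order $n_0$ with cyclomatic number $\gamma$ possessing a vertex of degree $1$, then for every $k\ge 1$ there is an SI graph of order $n_0+4k$ with the same cyclomatic number $\gamma$.

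The plan is to proceed by induction on $k$, so it suffices to exhibit a construction that, from any SI graph $H$ of order $n$ and cyclomatic number $\gamma$ having a degree-$1$ vertex, produces an SI graph $H'$ of order $n+4$, cyclomatic number $\gamma$, and again having a degree-$1$ vertex (so the induction can continue). The natural gadget is a short path attached at the pendant vertex. Let $v$ be a vertex of $H$ with $d_H(v)=1$, and let $u$ be its unique neighbour, so $d_H(u)=2$ since $H$ is stepwise irregular. First I would detach nothing; instead I append a path $v - a - b - c - w$ of four new vertices $a,b,c,w$... wait, that adds a pendant but changes the degree of $v$. The cleaner move: take the pendant edge $uv$, and replace it by inserting new vertices so that degrees stay in step. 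Concretely, subdivide the edge $uv$ appropriately or, better, attach a new path of length $4$ to $v$: add vertices $a,b,c,d$ and edges $va, ab, bc, cd$. Now $d(v)$ becomes $2$, $d(a)=d(b)=d(c)=2$, $d(d)=1$. The problem is that $v$ had degree $1$ with neighbour $u$ of degree $2$; now $v$ has degree $2$ with neighbours $u$ (degree $2$) and $a$ (degree $2$) — a violation along $uv$ and $va$. So a plain path attachment fails; the gadget must be engineered so that the degree of the attachment point is preserved.

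The fix is to use a balanced gadget on four new vertices that one attaches at $v$ while keeping $d(v)=1$. For instance: add four new vertices $a,b,c,d$, add the edge $ab$, make both $a$ and $b$ adjacent to $c$, and... this is getting complicated; the real technique (which is what I expect the paper uses) is to attach a $4$-vertex "pendant $C_4$-free" tree or, more likely, to take two new pendant-bearing path segments. The key mechanism I would aim for: find in $H$ an edge $xy$ with $d(x)=d$, $d(y)=d+1$, and replace it by a path $x - p - q - r - s - y$ of four internal vertices forcing degrees $d, (d\text{'s partner}), \dots, d+1$ to alternate correctly — but an inserted path of length $5$ between $x$ and $y$ requires the internal degrees to alternate by $\pm 1$, i.e. all equal to $2$, which forces $d(x)=d(y)\pm$ nothing consistent unless we are careful about parity. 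Since SI graphs are bipartite and every internal path vertex has degree $2$, the two endpoints of an inserted even-length path keep their degrees, while an odd-length insertion swaps the "parity class." Inserting a path with $4$ internal vertices (length $5$, odd) between adjacent $x,y$ keeps $d(x),d(y)$ unchanged, adds $4$ to the order, adds $4$ edges and $4$ vertices so $\gamma$ is unchanged ($m-n+1$ invariant), and every new vertex has degree $2$, matched against neighbours of degree $2$ (and against $x,y$ whose degrees differ from $2$ by exactly $1$ precisely when $\{d(x),d(y)\}=\{1,2\}$ or $\{2,3\}$).

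Hence the actual construction I would use: take the pendant vertex $v$ with neighbour $u$ (so $d(v)=1$, $d(u)=2$), and subdivide the edge $uv$ four times, i.e. replace $uv$ by $u - p_1 - p_2 - p_3 - p_4 - v$. Then $d(u)=2$, $d(p_i)=2$, $d(v)=1$ are unchanged except the $p_i$ are new; every new adjacency is $2$–$2$ or $2$–$1$, and every old adjacency is untouched, so $H'$ is SI; $n$ increases by $4$; $m$ increases by $4$ so $\gamma=m-n+1$ is preserved; and $v$ is still a degree-$1$ vertex, so the induction continues. I would write this up as: base case $k=1$ by this construction applied to $G_0$; inductive step identical. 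The main obstacle — really the only subtle point — is verifying that no new imbalance violates the SI condition, which reduces to checking the two edges incident to the subdivided endpoints ($u p_1$ of type $2$–$2$ and $p_4 v$ of type $2$–$1$) and confirming $\gamma$ is genuinely invariant under subdividing an edge four times; both are immediate, so the proof is short once the right gadget is identified.
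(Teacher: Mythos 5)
Your construction does not work, and the error is in the key verification you wave through. After subdividing the pendant edge $uv$ four times you get the path $u - p_1 - p_2 - p_3 - p_4 - v$ in which $u, p_1, p_2, p_3, p_4$ all have degree $2$. The edges $up_1$, $p_1p_2$, $p_2p_3$, $p_3p_4$ are then edges between two vertices of degree $2$, i.e.\ of imbalance $0$, and the SI condition demands that \emph{every} edge have imbalance exactly $1$ (not at most $1$). Your claim that ``every new adjacency is $2$--$2$ or $2$--$1$ \dots so $H'$ is SI'' is exactly where the proof fails: the $2$--$2$ adjacencies are forbidden. The same objection kills any variant in which consecutive inserted vertices both have degree $2$, so no subdivision-type gadget can work; indeed no path on more than three vertices is SI.

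A correct $4$-vertex gadget (the one used in the cited source, and in the same spirit as the figures accompanying Lemmas~\ref{graph operation by adding 5 vertices for higher cyclomatic number}, \ref{graph operation by adding 6 vertices for higher cyclomatic number} and \ref{graph operation for higher cyclomatic number} of this paper) is attached \emph{at} the pendant vertex rather than inserted into the pendant edge: let $v$ be the degree-$1$ vertex, whose neighbour $u$ necessarily has degree $2$; add new vertices $a,b,c,d$ and edges $va$, $vb$, $ac$, $bd$. Then $d(v)$ becomes $3$, so the old edge $uv$ has imbalance $|3-2|=1$; the new edges have degree pairs $(3,2)$, $(3,2)$, $(2,1)$, $(2,1)$; all other degrees are untouched, so the new graph is SI. It has $4$ more vertices and $4$ more edges, so $\gamma=m-n+1$ is unchanged, and $c$ (and $d$) is again a pendant vertex, so the step can be iterated, giving the induction on $k$ that you correctly set up. Your overall strategy (exhibit a degree- and $\gamma$-preserving $4$-vertex extension that keeps a pendant vertex, then induct) is the right one; only the gadget itself is wrong.
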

\begin{theorem}\cite{SIgraph}\label{order of bicyclic graph}
	The order of a stepwise irregular bicyclic graph (graph with $\gamma = 2$) is an odd integer. There exist stepwise irregular	bicyclic graphs whose order is any positive odd integer, except $1, 3, 5, 7, 9$, and $11$.
\end{theorem}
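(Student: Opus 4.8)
The plan divides along the two assertions of the theorem. The parity claim has a one-line proof: a bicyclic graph satisfies $\gamma(G)=m-n+1=2$, so $m=n+1$, and since an SI graph has an even number of edges, $n=m-1$ is odd; nothing more is needed here.

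For the existence part I would first settle all sufficiently large odd orders by bootstrapping from Lemma~\ref{adding 4 vertices having same cyclomatic number}. The idea is to produce two explicit bicyclic SI graphs, each carrying a vertex of degree $1$ — one of order $\equiv 1$ and one of order $\equiv 3$ modulo $4$. Applying Lemma~\ref{adding 4 vertices having same cyclomatic number} repeatedly to each base graph yields bicyclic SI graphs along the two arithmetic progressions of common difference $4$, and these together account for every odd integer from the larger base order on; taking the base orders as small as the construction allows then covers exactly the odd integers $n \ge 13$. Building the base graphs should be routine: start from a small bipartite bicyclic core whose two colour classes have constant degrees differing by one — for instance an even theta-graph, or a $K_{2,3}$-like block — and attach disjoint copies of the path $P_3$ (which is itself SI) at vertices of degree $2$, so that the order is padded to the required value while the imbalance condition is preserved.

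The delicate part is the finite list of claimed exceptions. The orders $n=1,3$ are immediate, since a simple graph on at most three vertices cannot carry $n+1$ edges. For $n\in\{5,7,9,11\}$ I would argue structurally: a connected bicyclic graph arises from one of the three bicyclic cores (a theta-graph, two vertex-disjoint cycles joined by a path, or two cycles sharing one vertex) by attaching rooted trees; bipartiteness of SI graphs forces every cycle in the core to be even; and the identity $\sum_{v\in V(G)} d_G(v) = 2(n+1)$ together with the condition that adjacent degrees differ by exactly one cuts the admissible degree sequences and attachment patterns down to a finite, checkable list. I expect this case analysis to be the real obstacle — and, in fact, I expect it \emph{not} to reproduce the list in the statement. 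Already for $n=5$ the complete bipartite graph $K_{2,3}$ is bicyclic, bipartite, and has every edge joining a degree-$2$ vertex to a degree-$3$ vertex, hence is a stepwise irregular bicyclic graph of order $5$; and a similarly explicit bipartite bicyclic graph of order $9$ (say one with a pendant vertex, five vertices of degree $2$, and three vertices of degree $3$) settles $n=9$. So a careful execution of the exception analysis produces the corrected statement, in which the exceptions are only $1,3,7,11$ — which is precisely what the present paper goes on to establish.
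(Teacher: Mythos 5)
You cannot be faulted for failing to reproduce a proof here: the statement you were given is quoted from Gutman's paper \cite{SIgraph} and is not proved in the present paper at all --- and, as your own analysis detects, it is false as written. Your parity argument is exactly right ($m=n+1$ together with the evenness of $m$ forces $n$ odd), and your bootstrapping plan for large odd orders (two base bicyclic SI graphs with a pendant vertex, in the two odd residue classes mod $4$, fed into Lemma~\ref{adding 4 vertices having same cyclomatic number}) is the standard route; its only soft spot is that you would still need to exhibit the two base graphs of orders $13$ and $15$ explicitly rather than gesture at ``a $K_{2,3}$-like block with $P_3$'s attached,'' but that is routine. The decisive point is your treatment of the exceptions: your observation that $K_{2,3}$ is a bicyclic SI graph of order $5$, and that an order-$9$ bipartite bicyclic graph with one pendant vertex, five vertices of degree $2$ and three of degree $3$ settles $n=9$, is precisely the content of the paper's own counterexample theorem --- the order-$5$ graph in Figure~\ref{bicyclic graph of order 5 and 9} is literally $K_{2,3}$, and the order-$9$ graph there has exactly the degree sequence you describe. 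So your conclusion that a careful execution yields the corrected exception list $1,3,7,11$ coincides with the paper's refined statement (Theorem~\ref{order of bicyclic graph refined statement}). In short: your proposal does not (and cannot) prove the cited theorem, but its diagnosis of where the statement fails, and the specific witnesses for orders $5$ and $9$, agree with what the paper itself establishes; the one thing still owed for a complete argument is an explicit verification (or construction) covering all odd $n\ge 13$ and a genuine impossibility argument for $n=7$ and $n=11$, which neither your sketch nor this paper supplies in detail.
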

\begin{theorem} \cite{SIgraph} \label{existence of SI graph}
	There exist connected stepwise irregular graphs of any order, except 1, 2, 4, 5, and 6.
\end{theorem}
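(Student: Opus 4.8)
The plan is to split the statement into two parts: (a) for $n\in\{1,2,4,5,6\}$ no connected SI graph of order $n$ exists, and (b) for every other $n$ one does. I would do (b) first, since it pins down exactly which small orders have to be examined by hand in (a).

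For (b) I split on the parity of $n$. When $n=2k+1$ is odd with $k\ge 1$, the complete bipartite graph $K_{k,k+1}$ is connected and stepwise irregular: its vertices have degree $k+1$ or $k$, and every edge joins the two degree classes, so adjacent degrees differ by exactly one. This disposes of every odd order $\ge 3$ at once. For even $n$ I would use Lemma~\ref{adding 4 vertices having same cyclomatic number}: a single connected SI graph of order $n_0$ with a pendant vertex propagates to connected SI graphs of all orders $n_0,n_0+4,n_0+8,\dots$. So it is enough to exhibit, with a pendant, one small connected SI graph of order $\equiv 0\pmod 4$ and one of order $\equiv 2\pmod 4$, and then to treat the finitely many even orders below these seeds directly. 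For the seeds I would take bipartite graphs with degree set $\{1,2,3\}$ — an even cycle whose vertices alternate between degree $3$ and degree $2$, padded with a few more degree-$2$ vertices and one pendant — realising orders $8$ and $14$; for such a graph the SI property is immediate, since every edge joins a degree-$2$ vertex to a degree-$1$ or degree-$3$ vertex. The only even order $\ge 8$ not reached this way is $n=10$, which I would handle with a single example: the subdivision graph of $K_4$ is connected, has degree set $\{2,3\}$, and is stepwise irregular. Altogether (b) gives a connected SI graph of every order outside $\{1,2,4,6\}$.

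For (a) I would exploit the structural reason SI graphs are bipartite: adjacent vertices differ in degree by $1$, hence in parity, so $V(G)$ partitions as $X\cup Y$ with $X$ the even-degree vertices and $Y$ the odd-degree vertices, every edge running between $X$ and $Y$. Thus $\sum_{u\in X}d_G(u)=\sum_{v\in Y}d_G(v)=m$, this common value is even, and any leaf must have a degree-$2$ neighbour. For $n\le 6$ these conditions, together with $m\ge n-1$ from connectivity, cut the admissible degree sequences down to a short list, and each can be eliminated by inspection; this rules out $n=1,2,4,6$.

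The step I expect to be the genuine obstacle is $n=5$. The parity and counting constraints above do not exclude the degree sequence $(3,3,2,2,2)$, and that sequence is realised by $K_{2,3}$, in which every edge joins a degree-$3$ vertex to a degree-$2$ vertex — so $K_{2,3}$ is itself a connected stepwise irregular graph of order $5$ (it is the $k=2$ case of the family used in (b)). Hence either there is a subtler obstruction that forbids order $5$, or the exceptional set in the statement is not tight and should read $\{1,2,4,6\}$; resolving this is the crux, and the remaining steps are routine once it is settled.
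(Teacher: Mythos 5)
You have correctly identified the crux: the statement as quoted is not tight, and no proof of it is possible, because connected SI graphs of order $5$ do exist. This is in fact precisely the point of the paper. The theorem is only cited from Gutman's paper and is never proved here; instead, the paper exhibits a connected bicyclic SI graph of order $5$ (Figure~\ref{bicyclic graph of order 5 and 9}) and then restates the result with exceptional set $\{1,2,4,6\}$ (Theorem~\ref{existence of SI graph refined statement}). Your observation that $K_{2,3}$ is itself a connected SI graph of order $5$ is an even simpler witness of the same fact, so your conclusion that ``either there is a subtler obstruction or the exceptional set should read $\{1,2,4,6\}$'' resolves exactly as the paper says: the exceptional set must be corrected.

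On the constructive side your program is sound and close in spirit to the standard arguments: $K_{k,k+1}$ handles all odd orders at once (the paper uses this family elsewhere as the graphs with degree set $\{\delta,\Delta\}$), the subdivision of a $3$-regular graph handles order $10$ (this matches Example~\ref{3 regular SI}), and even orders can be propagated from pendant-bearing seeds via Lemma~\ref{adding 4 vertices having same cyclomatic number}. Two pieces, however, are asserted rather than done: you never actually exhibit the order-$14$ (residue $2 \bmod 4$) seed with a pendant vertex --- one way to get it is to apply Lemma~\ref{graph operation by adding 5 vertices for higher cyclomatic number} to the order-$9$ graph of Figure~\ref{bicyclic graph of order 5 and 9}, whose pendant survives the construction --- and the nonexistence of connected SI graphs of orders $4$ and $6$ is left as an ``inspection of degree sequences'' sketch without the case analysis being carried out. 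Both gaps are routine to fill, but as written they are gaps; the essential insight, that order $5$ does not belong in the list of exceptions, is correct and agrees with the paper.
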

\section{Main Results}\label{Main Results}
In this section, at first we show that first and second Zagreb index and multiplicative versions of Zagreb indices for stepwise irregular graph are even integers. Next we show that there exist connected bicyclic stepwise irregular graph of order 5 and 9 which is a contradiction to Theorem 
\ref{order of bicyclic graph} and \ref{existence of SI graph}. 

\begin{theorem}
	If $G$ be a SI graph then the graph indices $M_1(G), M_2(G), \Pi_{1}(G)$ and $ \Pi_2(G)$ are even integer. 
\end{theorem}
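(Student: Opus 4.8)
The plan is to use the defining property of a stepwise irregular (SI) graph: every edge $uv$ joins vertices whose degrees differ by exactly one, so one endpoint has even degree and the other odd degree. I would first record this observation and, recalling that SI graphs are bipartite (Lemma in Section~\ref{Basic results}), note that the bipartition classes can be taken to be $V_{\mathrm{e}}$ (vertices of even degree) and $V_{\mathrm{o}}$ (vertices of odd degree). The key parity facts are: (i) $|V_{\mathrm{o}}|$ is even, since the number of odd-degree vertices in any graph is even; and (ii) for each edge $uv$, exactly one of $d_G(u), d_G(v)$ is even, hence the product $d_G(u)d_G(v)$ is even.

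For $M_1(G) = \sum_{v\in V(G)} d_G(v)^2$, I would split the sum over $V_{\mathrm{e}}$ and $V_{\mathrm{o}}$. Each term $d_G(v)^2$ with $v\in V_{\mathrm{e}}$ is even (square of an even number), and each term with $v\in V_{\mathrm{o}}$ is odd; since there is an even number of the latter by fact (i), the total is even. For $M_2(G)=\sum_{uv\in E(G)} d_G(u)d_G(v)$, every summand is even by fact (ii), so the sum is even. For the multiplicative indices, $\Pi_1(G)=\prod_{v\in V(G)} d_G(v)^2$ contains at least one even factor as soon as $G$ has a vertex of even degree; and $\Pi_2(G)=\prod_{uv\in E(G)} d_G(u)d_G(v)$ has every factor even by fact (ii). One small point to address: a connected SI graph on at least two vertices necessarily has an edge, hence a vertex of even degree and a vertex of odd degree, so $V_{\mathrm{e}}\neq\varnothing$; this justifies the claim for $\Pi_1$ (and rules out the degenerate empty-graph case).

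I do not expect any serious obstacle here — the statement is essentially an exercise in parity bookkeeping once the even/odd bipartition is in hand. The only place requiring a moment of care is $M_1$, where one cannot argue termwise: the odd-degree vertices contribute odd squares, and one must invoke the handshake-type fact that they come in an even number. Everything else follows directly from "exactly one endpoint of each edge has even degree."
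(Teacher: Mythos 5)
Your proof is correct, and for $M_2$, $\Pi_1$ and $\Pi_2$ it coincides with the paper's argument (each edge of an SI graph has exactly one endpoint of even degree, hence every edge-product is even, and the existence of an edge gives an even-degree vertex for $\Pi_1$). The difference lies in $M_1$: you partition $V(G)$ by degree parity and invoke the handshake-type fact that the number of odd-degree vertices is even, so the odd squares sum to an even number; the paper instead uses the algebraic identity $\sum_v d_G(v)^2 = \bigl(\sum_v d_G(v)\bigr)^2 - 2\sum_{u\ne v} d_G(u)d_G(v)$ together with the evenness of the total degree sum. Both routes are elementary and in fact show that $M_1(G)$ is even for \emph{every} graph, not only SI graphs; your version avoids the identity at the cost of the parity-count of odd-degree vertices, while the paper's avoids that count at the cost of the expansion. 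A small point in your favour: your justification that some vertex has even degree (via an edge of the SI graph) is more careful than the paper's bare assertion that ``at least one degree must be even in a graph,'' which as a general statement fails (e.g.\ for $K_2$) and really relies on the SI property.
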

\begin{proof}
	We see that 	$$ M_1(G)=\sum\limits_{v \in V(G)} d_G(v)^2= \bigg(\sum\limits_{v \in V(G)} d_G(v) \bigg)^2 - 2\sum\limits_{u \ne v  \in V(G)} d_G(u)d_G(v).$$
	
	Now since the sum of degrees of the vertices in $G$ is even number, $ M_1(G)$ is even. 
	
	Since $G$ is a SI graph, for any edge $uv \in E(G)$, $d_G(u)=d_G(v)\pm 1$ and therefore  if $d_G(u)$ is even, then $d_G(v)$ is odd and vice versa. This implies that for any edge $uv \in E(G)$, $d_G(u)d_G(v)$ is even. Thus $M_2(G)= \sum\limits_{uv \in E(G)} d_G(u)d_G(v)$ is even. 
	
	On the other hand,  the other two multiplicative Zagreb indices $\Pi_{1}(G)$ and $ \Pi_2(G)$ must be even as at least one of the degree of a vertex  must be even in a  graph. 
	
\end{proof}
\subsection{On existence of SI graph}

In \cite{SIgraph}, it has been claimed that there do not exist connected bicyclic stepwise irregular graphs of order 5 and 9. In the next result, we show that the claim is in fact not true by constructing examples of connected stepwise irregular graphs of order $5$ and $9$.
\begin{theorem}
	There exist  connected stepwise irregular bicyclic (with cyclomatic number $\gamma =2$) graphs of order $5$ and $9$. 
\end{theorem}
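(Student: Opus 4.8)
Since the assertion is purely existential, the plan is to exhibit concrete graphs in the two orders and then check, directly from the definitions, that each is connected, has cyclomatic number $\gamma = 2$ (equivalently $m = n+1$), and satisfies the stepwise condition $|d_G(u)-d_G(v)| = 1$ on every edge.

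\textbf{Order $5$.} I would take $G_5 = K_{2,3}$. It is connected and bipartite, has $n = 5$ vertices and $m = 6$ edges, so $\gamma = 6 - 5 + 1 = 2$; and every edge joins one of the two vertices of degree $3$ to one of the three vertices of degree $2$, so the stepwise condition is immediate. Hence $K_{2,3}$ is a connected bicyclic SI graph of order $5$, contradicting Theorem~\ref{order of bicyclic graph} (and also Theorem~\ref{existence of SI graph}).

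\textbf{Order $9$.} I would take $G_9$ on the vertex set $\{a,b,c,p,q,r,s,t,z\}$ with edge set $\{ap,\,aq,\,ar,\,bq,\,bs,\,bt,\,cr,\,cs,\,ct,\,pz\}$. Equivalently: form the theta graph in which the two hubs $b$ and $c$ are joined by two internally disjoint paths of length $2$ (through $s$ and through $t$) and one path of length $4$ (through $q,a,r$), then attach a pendant path $p$--$z$ of length $2$ at the central vertex $a$. This graph is connected, has $n = 9$ and $m = 10$, hence $\gamma = 10 - 9 + 1 = 2$. Its degrees are $d(a) = d(b) = d(c) = 3$, $d(p) = d(q) = d(r) = d(s) = d(t) = 2$ and $d(z) = 1$; a quick edge-by-edge check shows the edge $pz$ realizes the difference $2 - 1$ while each of the other nine edges realizes $3 - 2$, so the stepwise condition holds and $G_9$ is a connected bicyclic SI graph of order $9$.

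The only real work is finding the order-$9$ example, since the vertex-adding construction of Lemma~\ref{adding 4 vertices having same cyclomatic number} is unavailable here: $K_{2,3}$ has no vertex of degree $1$, so one must produce a new graph from scratch rather than inflate $G_5$. A short degree count narrows the search: writing $n_i$ for the number of vertices of degree $i$, one needs $n_1 + n_2 + n_3 = 9$ and $n_1 + 2n_2 + 3n_3 = 2m = 20$, hence $n_2 + 2n_3 = 11$; choosing $(n_1,n_2,n_3) = (1,5,3)$ and using that in an SI graph two degree-$2$ vertices cannot be adjacent essentially forces the structure above. Once the candidate is written down, the verification is routine, and (if desired) the two graphs can be displayed with a \texttt{tikz} figure.
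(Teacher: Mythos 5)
Your proposal is correct and takes essentially the same route as the paper: the paper's proof simply exhibits the two graphs in a figure, and your $G_5$ is exactly its order-$5$ example $K_{2,3}$, while your $G_9$ (theta graph on hubs $b,c$ with paths of lengths $2,2,4$ and a pendant path of length $2$ at the middle of the long path) is isomorphic to its order-$9$ example. The only difference is that you spell out the degree and cyclomatic-number verification explicitly, which the paper leaves to inspection of the figure.
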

\begin{proof}
	The %following
	two graphs shown in Figure~\ref{bicyclic graph of order 5 and 9} proves the above claim.
	\begin{figure}[h]
		\begin{center}
			
			\begin{tikzpicture}
			
			vertices
			\draw[fill=black] (2,0) circle (3pt);
			\draw[fill=black] (0,1.5) circle (3pt);
			\draw[fill=black] (4,1.5) circle (3pt);
			\draw[fill=black] (2,3) circle (3pt);
			\draw[fill=black] (2,1.5) circle (3pt);	
			
			\draw[fill=black] (6,1.5) circle (3pt);
			\draw[fill=black] (6.5,0) circle (3pt);
			\draw[fill=black] (7,1.5) circle (3pt);
			\draw[fill=black] (6.5,3) circle (3pt);
			\draw[fill=black] (7.5,0.5) circle (3pt);
			\draw[fill=black] (7.5,2.5) circle (3pt);
			\draw[fill=black] (8,1.5) circle (3pt);
			\draw[fill=black] (9,1.5) circle (3pt);
			\draw[fill=black] (10,1.5) circle (3pt);

			edges
			\draw[thick] (0,1.5)  --(2,0) -- (2,1.5) --(2,3)  --(0,1.5) (2,3)  --(4,1.5) --(2,0) (6,1.5) --(6.5,0) --(7,1.5) --(6.5,3) --(6,1.5) (6.5,3) --(7.5,2.5) --(8,1.5) --(7.5,0.5) --(6.5,0) (8,1.5) --(9,1.5) --(10,1.5);
			\end{tikzpicture}
			\caption{Bicyclic SI graph of order $5$ and $9$}
			\label{bicyclic graph of order 5 and 9}
		\end{center}
	\end{figure}
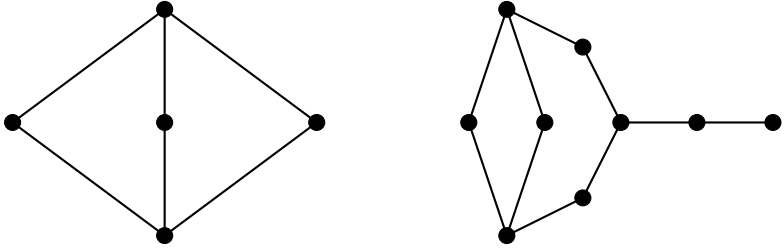	
\end{proof}
Using the above theorem we refine 
the statements of 
Theorem~\ref{order of bicyclic graph} and~\ref{existence of SI graph} as follows: 
\begin{theorem} \label{order of bicyclic graph refined statement}
	The order of a stepwise irregular bicyclic graph (graph with $\gamma = 2$) is an odd integer. There exist stepwise irregular	bicyclic graphs whose order is any positive odd integer, except $1, 3, 7$, and $11$.
\end{theorem}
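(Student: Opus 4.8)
The plan is to treat the parity claim and the realizability claim separately, building on the results already in hand. For the parity claim, I would note that a connected bicyclic graph on $n$ vertices has $m = n+1$ edges (from $\gamma = m-n+1 = 2$), and since an SI graph has an even number of edges (the first lemma of Section~\ref{Basic results}), $n+1$ is even, forcing $n$ to be odd. This is in fact already the content of the first sentence of Theorem~\ref{order of bicyclic graph}, so it may simply be quoted.

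For the realizability claim, the idea is to merge Theorem~\ref{order of bicyclic graph} with the construction of the immediately preceding theorem. Gutman's Theorem~\ref{order of bicyclic graph} already provides connected bicyclic SI graphs of every odd order $n \notin \{1,3,5,7,9,11\}$, that is, of every odd $n \ge 13$; and the preceding theorem exhibits such graphs of orders $5$ and $9$ (Figure~\ref{bicyclic graph of order 5 and 9}). Hence connected bicyclic SI graphs exist for every odd $n$ outside $\{1,3,7,11\}$. As an aside, for the residue class $n \equiv 1 \pmod 4$ one could make this half self-contained without invoking Gutman's construction: the order-$9$ graph in Figure~\ref{bicyclic graph of order 5 and 9} has a vertex of degree $1$, so Lemma~\ref{adding 4 vertices having same cyclomatic number} produces connected bicyclic SI graphs of all orders $9 + 4k$, and order $5$ is covered directly.

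What remains is to confirm that orders $1, 3, 7, 11$ are genuinely not realizable. For $n = 1$ and $n = 3$ this is immediate: a simple bicyclic graph of order $n$ requires $n+1$ edges, whereas the complete graph $K_n$ has only $n(n-1)/2$ edges, which equals $0$ for $n = 1$ and $3$ for $n = 3$ --- in both cases fewer than $n+1$. So no bicyclic simple graph exists on one or three vertices, hence certainly no SI one. For $n = 7$ and $n = 11$, the non-existence is precisely the portion of Theorem~\ref{order of bicyclic graph} that the order-$5$ and order-$9$ constructions do not overturn, so I would inherit it. If one insisted on a self-contained argument, the tool would be a finite degree-sequence enumeration: an SI bicyclic graph is bipartite with $n$ vertices and $n+1$ edges whose adjacent degrees differ by exactly one, and for $n \in \{7, 11\}$ only finitely many degree sequences survive the handshake lemma and the bipartiteness constraint, each eliminable by inspection.

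The only delicate step is this last one --- ruling out $n = 7$ and $n = 11$. Since the present theorem is a refinement of Theorem~\ref{order of bicyclic graph} rather than an independent re-derivation, the cleanest route is to quote that non-existence; the self-contained alternative (the small-case enumeration just described) is the main, though still entirely elementary, obstacle.
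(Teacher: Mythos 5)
Your proposal is correct and matches the paper's (implicit) argument: the paper gives no separate proof of this theorem, obtaining it exactly as you do by combining Gutman's Theorem~\ref{order of bicyclic graph} with the newly constructed order-$5$ and order-$9$ bicyclic SI graphs of Figure~\ref{bicyclic graph of order 5 and 9}, and inheriting both the parity statement and the non-existence for orders $1,3,7,11$ from Gutman. Your asides (the $9+4k$ construction via Lemma~\ref{adding 4 vertices having same cyclomatic number} and the elementary exclusion of $n=1,3$) are sound but go beyond what the paper records.
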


\begin{theorem}\label{existence of SI graph refined statement}
	There exist connected stepwise irregular graphs of any order, except $1, 2, 4$, and $6$.
\end{theorem}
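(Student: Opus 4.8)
The plan is to obtain the refined statement as a direct corollary of Gutman's Theorem~\ref{existence of SI graph} together with the order-$5$ example already exhibited. Theorem~\ref{existence of SI graph} is logically a biconditional across orders: it simultaneously asserts the existence of a connected SI graph for every order outside $\{1,2,4,5,6\}$ and the non-existence of such a graph for every order inside $\{1,2,4,5,6\}$. My refinement alters only the status of order $5$, moving it from the exception set to the realizable set, so the work splits into an existence direction and a non-existence direction, each of which is short.

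For the existence direction I would argue that nothing new is needed for orders $n=3$ or $n\ge 7$: these are already covered verbatim by Theorem~\ref{existence of SI graph}. The single gap between the old exception set $\{1,2,4,5,6\}$ and the new one $\{1,2,4,6\}$ is order $5$, and this gap is filled by the connected bicyclic SI graph of order $5$ drawn on the left of Figure~\ref{bicyclic graph of order 5 and 9}; that graph is connected, has exactly five vertices, and satisfies the SI condition, so a connected SI graph of order $5$ does exist. Hence connected SI graphs exist for every order outside $\{1,2,4,6\}$.

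For the non-existence direction I would simply observe that deleting $5$ from the exception list leaves the four orders $1,2,4,6$, for each of which Theorem~\ref{existence of SI graph} already guarantees that no connected SI graph exists. If one wants the argument self-contained for the three smallest cases, I could reprove non-existence in a line each: order $1$ has no edge and is regular, hence not stepwise irregular; order $2$ forces $K_2$, whose two vertices share the common degree $1$; and for order $4$ the graph must be bipartite by the bipartiteness lemma, yet every connected bipartite graph on four vertices---$P_4$ with degree sequence $(1,2,2,1)$, the star $K_{1,3}$, and the cycle $C_4$---contains an edge joining two vertices of equal degree, so none is SI.

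The only genuinely laborious case is $n=6$, whose non-existence I would prefer to inherit from Theorem~\ref{existence of SI graph} rather than recheck by hand; an independent verification would require enumerating all connected bipartite graphs on six vertices and ruling each out, which is routine but tedious. This is the step I expect to be the main obstacle to a fully self-contained proof, and it is precisely why I would cite the already-established non-existence for order $6$ instead of reproving it. Combining the existence and non-existence directions then yields exactly the claim that connected stepwise irregular graphs exist for every order except $1,2,4$, and $6$.
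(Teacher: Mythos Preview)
Your approach is essentially identical to the paper's: the paper simply presents Theorem~\ref{existence of SI graph refined statement} as the refinement of Gutman's Theorem~\ref{existence of SI graph} obtained by adjoining the order-$5$ example from Figure~\ref{bicyclic graph of order 5 and 9}, with no further argument, and you have spelled this out correctly. One small slip in your optional self-contained check for $n=4$: the star $K_{1,3}$ does \emph{not} contain an edge joining two vertices of equal degree (every edge joins degree~$3$ to degree~$1$); it fails to be SI because the imbalance is $2$, not because of an equal-degree edge.
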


In the next result, we prove that existence of a SI graph of order $n$ with cyclomatic number $\gamma$ along with a vertex of degree one implies the existence of SI graphs of order $n + 4k + i$ with cyclomatic number $\gamma + i$, $0 \le i \le 2$. This result is a different kind of generalization of the SI graphs same in the line of Lemma \ref{adding 4 vertices having same cyclomatic number}. To prove the result, we need following lemmas.
\begin{lemma} \label{graph operation by adding 5 vertices for higher cyclomatic number}
	Let $G_0$ be a stepwise irregular graph whose vertex u is of degree $1$, cf. Figure~\ref{graph operation by adding 5 vertices}. The stepwise irregular graph $G_1$ can be constructed by adding $5$ new vertices as shown in  cf. Figure~\ref{graph operation by adding 5 vertices}.  
	%Then also $G_1$ is a stepwise irregular graph. 
	Moreover, if $G_0$  has cyclomatic number $\gamma$, then $G_1$ has cyclomatic number $\gamma +1$. 
\end{lemma}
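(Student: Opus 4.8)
The plan is to read the explicit gadget off Figure~\ref{graph operation by adding 5 vertices} and then verify the two assertions separately: that $G_1$ is stepwise irregular, and that $\gamma(G_1)=\gamma(G_0)+1$. First I would record the one structural fact forced by $G_0$ being SI: since $d_{G_0}(u)=1$ and adjacent degrees differ by exactly $1$, the unique neighbour $w$ of $u$ satisfies $d_{G_0}(w)=2$. This dictates the shape of the construction. The five new vertices must be attached through $u$ so that $G_1$ stays connected, but if the new edges only raised $d(u)$ to $2$ the edge $uw$ would acquire imbalance $0$; hence $d(u)$ must be lifted to the next admissible odd value, namely $3$. With $d_{G_1}(u)=3$ the edge $uw$ keeps imbalance $1$, and since we add nothing to the rest of $G_0$ all degrees in $G_0-u$ and all old edges other than $uw$ are untouched, so the SI condition continues to hold on the whole of $G_0$.

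Next I would check the new part edge by edge. In the construction $u$ together with the five new vertices forms a $4$-cycle through $u$ — two new degree-$2$ vertices, each adjacent to $u$ and to a common new degree-$3$ vertex $z$ — with a pendant path of length $2$ hanging from $z$, using exactly $6$ new edges. Along every new edge one endpoint has even degree and the other odd degree, the values being $3$ and $2$ on the four cycle edges and on the first edge of the path, and $2$ and $1$ on the last edge of the path; every imbalance therefore equals $1$, so $G_1$ is SI. I would also point out that $G_1$ again has a vertex of degree $1$ (the free end of the new path), which is exactly what makes the step iterable in the applications that follow.

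Finally, the cyclomatic number is an elementary count: $n(G_1)=n(G_0)+5$ and $m(G_1)=m(G_0)+6$, so $\gamma(G_1)=m(G_1)-n(G_1)+1=\bigl(m(G_0)-n(G_0)+1\bigr)+1=\gamma(G_0)+1$. The only real subtlety is the degree bookkeeping at $u$ in the first step — the observation that $d_{G_0}(u)=1$ pins $d_{G_0}(w)=2$ and thereby forces the gadget to bring $d(u)$ up to exactly $3$; past that point the argument is just the routine verification that $|d_{G_1}(x)-d_{G_1}(y)|=1$ on the six new edges together with the one-line vertex/edge tally.
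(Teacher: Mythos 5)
Your proposal is correct and follows the same route as the paper: the paper's proof simply says the claim follows from Figure~\ref{graph operation by adding 5 vertices}, whose gadget is exactly the one you describe (a $4$-cycle through $u$ via two new degree-$2$ vertices and a new degree-$3$ vertex, with a pendant path of length $2$), and your edge-by-edge imbalance check plus the count of $5$ new vertices and $6$ new edges is just the explicit verification the paper leaves to the reader. Your preliminary observation that $d_{G_0}(u)=1$ forces its neighbour to have degree $2$, and that $G_1$ again has a pendant vertex, usefully makes explicit why the construction is sound and iterable.
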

\begin{proof}
	From the Figure~\ref{graph operation by adding 5 vertices} itself,  the proof follows. 
	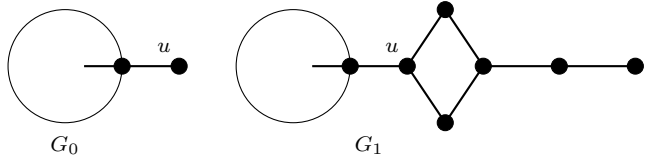
\begin{figure}[h]
		\begin{center}

			\begin{tikzpicture}
			
			vertices
			\draw[fill=black] (2.5,.75) circle (3pt);
			\draw[fill=black] (1.75,.75) circle (3pt);
			%\draw (1, .75)--(1,1.5);
			\draw (1,0.75) circle (.75cm);
			
			\draw[fill=black] (4.75,.75) circle (3pt);
			\draw[fill=black] (5.5,.75) circle (3pt);
			\draw[fill=black] (8.5,0.75) circle (3pt);
			\draw[fill=black] (7.5,0.75) circle (3pt);
			\draw[fill=black] (6.5,0.75) circle (3pt);
			\draw[fill=black] (6,1.5) circle (3pt);
			\draw[fill=black] (6,0) circle (3pt);
			%\draw (1, .75)--(1,1.5);
			\draw (4,0.75) circle (.75cm);
			
			vertex labels
			\node at (1,-.3) {$G_0$};
			\node at (5,-.3) {$G_1$};
			\node at (2.3,1) {$u$};
			\node at (5.3,1) {$u$};
			edges
			\draw[thick] (1.25,.75) --(1.75,.75)  --(2.5,.75) (4.25,.75) --(4.75,.75) --(5.5,.75) --(6,1.5) --(6.5,0.75) --(7.5,0.75) --(8.5,0.75) (5.5,.75) --(6,0) --(6.5,0.75);
			\end{tikzpicture}
			\caption{The graph considered in Lemma~\ref{graph operation by adding 5 vertices for higher cyclomatic number}}
			\label{graph operation by adding 5 vertices}
		\end{center}
	\end{figure}
	
\end{proof}
%Also, we prove the following lemma:
\begin{lemma} \label{graph operation by adding 6 vertices for higher cyclomatic number}
	Let $G_0$ be a stepwise irregular graph whose vertex u is of degree $1$, cf. Figure~\ref{graph operation by adding 6 vertices}. A new stepwise irregular graph $G_1$ can be constructed by adding $6$ vertices as shown in  cf. Figure~\ref{graph operation by adding 6 vertices}.  %Then also $G_1$ is a stepwise irregular graph. Moreover 
	The graph $G_1$ has cyclomatic number $\gamma +2$, if $G_0$  has cyclomatic number $\gamma$. 
\end{lemma}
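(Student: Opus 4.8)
The plan is to imitate the proof of Lemma~\ref{graph operation by adding 5 vertices for higher cyclomatic number}: produce an explicit gadget on six new vertices, graft it onto the degree-$1$ vertex $u$, and then verify by inspection that (i) the resulting graph $G_1$ is stepwise irregular and (ii) $\gamma(G_1)=\gamma+2$.

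First I would fix the local situation at $u$. Since $G_0$ is SI and $d_{G_0}(u)=1$, the unique neighbour $w$ of $u$ satisfies $d_{G_0}(w)=2$. Any gadget grafted at $u$ leaves the edge $uw$ in place and changes no degree of $G_0$ other than that of $u$; hence, to keep the imbalance of $uw$ equal to $1$ we are forced to take $d_{G_1}(u)=3$, i.e. $u$ acquires exactly two new neighbours. So the gadget must convert the two half-edges at $u$ into two extra independent cycles (necessarily even, since $G_1$ must remain bipartite) using only six new vertices.

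Concretely, I would introduce new vertices $x_1,x_2,y_1,y_2,z_1,z_2$ and the eight new edges $ux_1,\ ux_2,\ x_1y_1,\ x_2y_2,\ y_1z_1,\ z_1y_2,\ y_1z_2,\ z_2y_2$; equivalently, the gadget is the theta graph with hubs $y_1,y_2$ joined by the three internally disjoint paths $y_1z_1y_2$, $y_1z_2y_2$ and $y_1x_1ux_2y_2$. Then $d_{G_1}(u)=d_{G_1}(y_1)=d_{G_1}(y_2)=3$ and $d_{G_1}(x_1)=d_{G_1}(x_2)=d_{G_1}(z_1)=d_{G_1}(z_2)=2$, while every vertex of $G_0$ other than $u$ keeps its old degree. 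Running over the edges of $G_1$: $uw$ contributes $|3-2|=1$; each of the eight new edges joins a degree-$3$ vertex to a degree-$2$ vertex, so contributes $1$; and every edge inside $G_0$ is untouched. Hence $G_1$ is SI, and it is connected because the gadget is connected and meets $G_0$ at $u$. Finally we added $6$ vertices and $8$ edges, so $\gamma(G_1)=(m_0+8)-(n_0+6)+1=(m_0-n_0+1)+2=\gamma+2$.

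The only genuine difficulty is choosing the gadget so that all the stepwise-irregularity constraints close up simultaneously on just six vertices. The key structural fact (sharpening the stated lemma that SI graphs are bipartite) is that in a connected SI graph the two colour classes are precisely the odd-degree and the even-degree vertices; consequently every ``branch vertex'' used to create an extra cycle must have odd degree, and since $u$ is already pinned at degree $3$ it cannot itself serve as such a branch vertex. This rules out the naive attempts -- hanging two $4$-cycles or two short cycles directly at $u$, or grafting a copy of $K_{2,3}$ -- all of which fail either on parity or on the forced degree of $u$. Placing the two hubs one step away from $u$ and routing one of the three theta-paths through $u$ is exactly what lets six vertices suffice; once that figure is drawn, the proof reduces to the routine edge-by-edge check above.
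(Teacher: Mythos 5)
Your construction is correct and is in fact the same gadget the paper draws in Figure~\ref{graph operation by adding 6 vertices}: there the two degree-$3$ hubs are joined by two paths of length two and by a length-four path routed through $u$, which is exactly your theta graph on $y_1,y_2$ with the path $y_1x_1ux_2y_2$, so the two constructions are isomorphic. The paper's proof is essentially ``see the figure,'' whereas you also carry out the edge-by-edge imbalance check and the count $\gamma(G_1)=(m_0+8)-(n_0+6)+1=\gamma+2$, which is precisely the verification the paper leaves implicit.
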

\begin{proof}
	We construct the graph $G_{1}$ from $G_0$ in the following manner as given in Figure~\ref{graph operation by adding 6 vertices}, which proves the above claim. 
	
	\begin{figure}[h]
		\begin{center}

			\begin{tikzpicture}
			
			vertices
			\draw[fill=black] (2.5,.75) circle (3pt);
			\draw[fill=black] (1.75,.75) circle (3pt);
			%\draw (1, .75)--(1,1.5);
			\draw (1,0.75) circle (.75cm);
			
			\draw[fill=black] (4.75,.75) circle (3pt);
			\draw[fill=black] (5.5,.75) circle (3pt);
			\draw[fill=black] (6,0) circle (3pt);
			\draw[fill=black] (6,1.5) circle (3pt);
			\draw[fill=black] (6.5,.75) circle (3pt);
			\draw[fill=black] (7,0) circle (3pt);
			\draw[fill=black] (7,1.5) circle (3pt);
			\draw[fill=black] (7.5,.75) circle (3pt);
			%\draw (1, .75)--(1,1.5);
			\draw (4,0.75) circle (.75cm);
			
			vertex labels
			\node at (1,-.3) {$G_0$};
			\node at (5,-.3) {$G_1$};
			\node at (2.3,1) {$u$};
			\node at (5.3,1) {$u$};
			edges
			\draw[thick] (1.25,.75) --(1.75,.75)  --(2.5,.75) (4.25,.75) --(4.75,.75) --(5.5,.75) --(6,1.5) --(7,1.5) --(7.5,0.75) --(7,0) --(6,0) --(5.5,.75) (7,0) --(6.5,0.75) --(7,1.5);
			\end{tikzpicture}
			\caption{The graph considered in Lemma~\ref{graph operation by adding 6 vertices for higher cyclomatic number}}
			\label{graph operation by adding 6 vertices}
		\end{center}
	\end{figure}
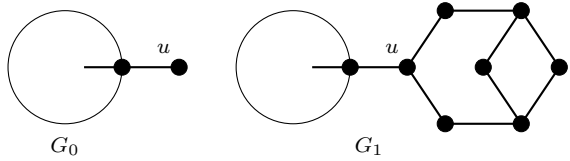
\end{proof}

Using  Lemma~\ref{graph operation by adding 5 vertices for higher cyclomatic number} and~\ref{graph operation by adding 6 vertices for higher cyclomatic number}, we prove the next theorem.
\begin{theorem}
	Let $G$ be a SI graph of order $n$ and cyclomatic number $\gamma$, possessing a vertex of degree $1$. Then for all $k=1,2,...,$ there exist SI graphs of order $n+4k+i$ and cyclomatic number $\gamma +i$, for $i=0,1,2$.
\end{theorem}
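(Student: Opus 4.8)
The plan is to treat the three residues $i=0,1,2$ separately and, in each case, bootstrap from Lemma~\ref{adding 4 vertices having same cyclomatic number}, which already supplies, from any SI graph with a degree-$1$ vertex, SI graphs whose order grows in steps of $4$ while the cyclomatic number stays fixed. The case $i=0$ is then immediate: it is exactly Lemma~\ref{adding 4 vertices having same cyclomatic number} applied to $G$, yielding SI graphs of order $n+4k$ and cyclomatic number $\gamma$ for every $k\ge 1$.

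For $i=1$ I would first apply Lemma~\ref{graph operation by adding 5 vertices for higher cyclomatic number} to $G$, obtaining an SI graph $G'$ of order $n+5$ and cyclomatic number $\gamma+1$. The point to verify is that $G'$ again possesses a vertex of degree $1$; reading Figure~\ref{graph operation by adding 5 vertices}, the attached gadget terminates in a pendant, so it does. Feeding $G'$ into Lemma~\ref{adding 4 vertices having same cyclomatic number} with parameter $k-1$ then gives, for every $k\ge 2$, an SI graph of order $(n+5)+4(k-1)=n+4k+1$ and cyclomatic number $\gamma+1$, while $G'$ itself covers $k=1$.

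For $i=2$ I would settle $k=1$ directly with Lemma~\ref{graph operation by adding 6 vertices for higher cyclomatic number}, which turns $G$ into an SI graph of order $n+6=n+4\cdot 1+2$ and cyclomatic number $\gamma+2$. For $k\ge 2$ I would instead apply Lemma~\ref{graph operation by adding 5 vertices for higher cyclomatic number} twice in succession, $G\mapsto G'$ (order $n+5$, cyclomatic number $\gamma+1$, pendant intact) $\mapsto G''$ (order $n+10$, cyclomatic number $\gamma+2$, pendant intact), and then apply Lemma~\ref{adding 4 vertices having same cyclomatic number} to $G''$ with parameter $k-2$, producing an SI graph of order $(n+10)+4(k-2)=n+4k+2$ and cyclomatic number $\gamma+2$; the graph $G''$ itself settles $k=2$. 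Together these cases exhaust all pairs $(k,i)$ with $k\ge 1$ and $i\in\{0,1,2\}$.

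The arithmetic on orders and cyclomatic numbers is routine; the one genuinely delicate point — and the main obstacle — is making sure a degree-$1$ vertex survives each time one wants to re-apply Lemma~\ref{adding 4 vertices having same cyclomatic number} or a further $5$-vertex operation. For the $5$-vertex gadget this is clear from Figure~\ref{graph operation by adding 5 vertices}, since that gadget always ends in a pendant. The $6$-vertex gadget of Lemma~\ref{graph operation by adding 6 vertices for higher cyclomatic number}, however, creates only vertices of degrees $2$ and $3$ and raises the degree of $u$ from $1$ to $3$, so it cannot be chained — this is precisely why the subcase $i=2$, $k\ge 2$, must be routed through two $5$-vertex operations rather than one $6$-vertex operation, and why both lemmas are needed (the $6$-vertex one is unavoidable for $i=2$, $k=1$). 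One should also check, as the figures do, that stepwise irregularity is preserved at the attachment vertex $u$, whose newly created neighbours all have degree $2$.
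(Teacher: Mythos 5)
Your proposal is correct and follows essentially the same route as the paper: $i=0$ via Lemma~\ref{adding 4 vertices having same cyclomatic number}, $i=1$ via one application of Lemma~\ref{graph operation by adding 5 vertices for higher cyclomatic number} followed by the $4$-vertex lemma, and $i=2$ via the $6$-vertex lemma for $k=1$ and two $5$-vertex operations plus the $4$-vertex lemma for $k\ge 2$. Your explicit check that the $5$-vertex gadget preserves a pendant vertex (and that the $6$-vertex gadget does not, forcing the detour for $i=2$, $k\ge 2$) is exactly the point the paper's figures leave implicit.
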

\begin{proof}
	\begin{enumerate}
		\item $i=0$ is same as Lemma~\ref{adding 4 vertices having same cyclomatic number}. 
		
		\item {\bf $i=1$:} for $k=1$ by application of Lemma~\ref{graph operation by adding 5 vertices for higher cyclomatic number} once we get the required graph. For $k \ge 2$, by  application of Lemma~\ref{graph operation by adding 5 vertices for higher cyclomatic number} once and then applying Lemma~\ref{adding 4 vertices having same cyclomatic number} $(k-1)$ times, we get the required graph.
		
		\item {\bf $i=2$:} for $k=1$, by one time application of Lemma~\ref{graph operation by adding 6 vertices for higher cyclomatic number} and for $k=2$, by two times application of Lemma~\ref{graph operation by adding 5 vertices for higher cyclomatic number},  we get the required graph respectively.  
		
		For $k = 3$, we can get the required graph by two times application of Lemma \ref{graph operation by adding 5 vertices for higher cyclomatic number} and one time application of Lemma~\ref{adding 4 vertices having same cyclomatic number} respectively.
		
		For $k \ge 4$, by two times application of Lemma~\ref{graph operation by adding 5 vertices for higher cyclomatic number} and $(k-2)$ times Lemma~\ref{adding 4 vertices having same cyclomatic number} we get the required graph.
		
	\end{enumerate} 
\end{proof}
\begin{cor}
	Let $G$ be a SI graph of order $n$ and cyclomatic number $\gamma$, possessing a vertex of degree $1$. Then for all $k=2,3...,$ there exist SI graphs of order $n+4k+3$ and cyclomatic number $\gamma +3$.
\end{cor}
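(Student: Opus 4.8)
The plan is to reduce the corollary to the theorem immediately preceding it (which produces, from a SI graph of order $n$ and cyclomatic number $\gamma$ with a pendant vertex, SI graphs of order $n+4k+i$ and cyclomatic number $\gamma+i$ for $i=0,1,2$ and all $k\ge 1$) by first ``spending'' one unit of cyclomatic number via Lemma~\ref{graph operation by adding 5 vertices for higher cyclomatic number}. Concretely, starting from the given SI graph $G$ of order $n$, cyclomatic number $\gamma$, with a vertex of degree $1$, I would apply Lemma~\ref{graph operation by adding 5 vertices for higher cyclomatic number} once to obtain a SI graph $G'$ of order $n+5$ and cyclomatic number $\gamma+1$. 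The key point to record here is that $G'$ again contains a vertex of degree $1$: inspecting the construction in Figure~\ref{graph operation by adding 5 vertices}, the five appended vertices terminate in a pendant vertex, so the hypothesis needed for any further application is preserved.

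Next I would feed $G'$ into the preceding theorem with the choice $i=2$. That theorem then guarantees, for every $k'=1,2,\dots$, a SI graph of order $(n+5)+4k'+2 = n+4k'+7$ and cyclomatic number $(\gamma+1)+2 = \gamma+3$. Setting $k' = k-1$, which is a legitimate value precisely because the corollary restricts to $k\ge 2$, yields a SI graph of order $n+4(k-1)+7 = n+4k+3$ with cyclomatic number $\gamma+3$, which is exactly the asserted family and would complete the argument.

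An equally valid, more hands-on route is to build the target graph directly: apply Lemma~\ref{graph operation by adding 5 vertices for higher cyclomatic number} and then Lemma~\ref{graph operation by adding 6 vertices for higher cyclomatic number} (raising the order by $5+6=11$ and the cyclomatic number by $1+2=3$), and finally apply Lemma~\ref{adding 4 vertices having same cyclomatic number} a total of $k-2$ times to add the remaining $4(k-2)$ vertices without changing the cyclomatic number; since $11 + 4(k-2) = 4k+3$, the order comes out to $n+4k+3$. Either way, the only genuinely delicate point -- and the one I would check most carefully -- is that every intermediate graph still carries a vertex of degree $1$, so that the next operation in the chain is applicable; for Lemma~\ref{graph operation by adding 5 vertices for higher cyclomatic number} this is visible from its figure, and in the direct route one should either verify the analogous persistence for Lemma~\ref{graph operation by adding 6 vertices for higher cyclomatic number} or simply arrange the order of applications so that a pendant vertex is always available. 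The route through the preceding theorem is preferable exactly because it confines this bookkeeping to the single initial step.
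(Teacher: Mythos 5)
Your proposal is correct and is essentially the paper's own argument in repackaged form: the paper proves the corollary by composing Lemma~\ref{graph operation by adding 5 vertices for higher cyclomatic number}, Lemma~\ref{graph operation by adding 6 vertices for higher cyclomatic number} and Lemma~\ref{adding 4 vertices having same cyclomatic number} (Lemma~5 $+$ Lemma~6 for $k=2$, Lemma~5 three times for $k=3$, and Lemma~5 three times plus Lemma~4 $(k-3)$ times for $k\ge 4$), which is exactly what your reduction through the preceding theorem unwinds to after one application of Lemma~\ref{graph operation by adding 5 vertices for higher cyclomatic number}. Your flagged caveat about always keeping a pendant vertex available (in particular not scheduling Lemma~\ref{adding 4 vertices having same cyclomatic number} after Lemma~\ref{graph operation by adding 6 vertices for higher cyclomatic number}) is the right bookkeeping point, and your primary route handles it correctly.
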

\begin{proof}
	For $k=2$, first applying Lemma~\ref{graph operation by adding 5 vertices for higher cyclomatic number} and then Lemma~\ref{graph operation by adding 6 vertices for higher cyclomatic number} we get the required graph.
	
	For $k = 3$, the required graph can be constructed by application of the Lemma \ref{graph operation by adding 5 vertices for higher cyclomatic number}, $3$ times. 
	
	For $k \ge 4$, first applying lemma~\ref{graph operation by adding 5 vertices for higher cyclomatic number}, $3$ times and then Lemma~\ref{adding 4 vertices having same cyclomatic number}, $(k-3)$ times, we get the required graph.
\end{proof}

In the above results, we proved that the existence of a SI graph with a pendant vertex imply the existence of SI graphs of different order and cyclomatic number. In the following results we show that the existence of a SI graphs with a vertex of degree $2$ having neighbours of degree $3$ proves the existence of SI graphs of order $n + 7k$ and cyclomatic number $\gamma + k$. 

To prove the above result, we will use the following lemma.
%\begin{remark}
%	For any SI graph of order $n$ and cyclomatic number $\gamma$, possessing a vertex of degree $1$, there does not exist any SI graph of order $n+7$ and cyclomatic number $\gamma +3$. 
%\end{remark}
\begin{lemma} \label{graph operation for higher cyclomatic number}
	Let $G_0$ be a stepwise irregular graph whose vertex u is of degree 2 having neighbors each of which are of degree $3$, cf. Figure~\ref{graph operation}. The SI graph $G_1$ can be constructed by adding $7$ new vertices as shown in  cf. Figure~\ref{graph operation}.  
	%Then also $G_1$ is a stepwise irregular graph. 
	Moreover, if $G_0$  has cyclomatic number $\gamma$, then $G_1$ has cyclomatic number $\gamma +1$. 
\end{lemma}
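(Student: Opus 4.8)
The plan is to give a direct, constructive proof in the same spirit as Lemmas~\ref{graph operation by adding 5 vertices for higher cyclomatic number} and~\ref{graph operation by adding 6 vertices for higher cyclomatic number}: exhibit an explicit gadget on $7$ new vertices that is glued onto $G_0$ at the prescribed vertex $u$, and then verify the two requirements, namely that the resulting graph $G_1$ is again stepwise irregular and that its cyclomatic number is $\gamma+1$. Since $u$ has degree $2$ in $G_0$ with both neighbours of degree $3$, attaching the gadget will raise $\deg(u)$ to $3$; this is exactly what forces the neighbours of $u$ (which had degree $3$) to remain compatible, because their degrees are unchanged and $|3-3|$ would be illegal---so the gadget must be designed so that no new edge is incident to the old neighbours of $u$, only to $u$ itself, and so that $u$'s new degree is $3$ while every new vertex and every old vertex keeps a degree differing by exactly one from each of its neighbours. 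Concretely I would draw, as in Figure~\ref{graph operation}, a small bipartite block attached to $u$ by a single edge, internally containing one cycle (to bump $\gamma$ by $1$) and arranged on the degree pattern $3,2,3,2,\dots$ so the SI condition holds throughout the added part and at the splice point.

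First I would fix the attachment: add an edge from $u$ to a new vertex $w_1$, so $\deg_{G_1}(u)=3$, consistent with $u$'s two old neighbours of degree $3$ only if---wait, $|3-3|=0$, so in fact the construction must instead be set up so that $u$ becomes degree $2\to$ no; the correct reading of the hypothesis is that the gadget is inserted so that $u$ ends with degree $3$ and its \emph{old} neighbours are simultaneously modified, or more plausibly the figure shows $u$ retaining degree $2$ with one old neighbour and one new neighbour. So the first real step is to read off from Figure~\ref{graph operation} precisely which edges at $u$ are kept and which are added, and record the resulting degree of $u$ together with the degrees of all $7$ new vertices. Second, I would check the SI property edge by edge: for each new edge $xy$ confirm $|\deg_{G_1}(x)-\deg_{G_1}(y)|=1$, and for each old edge confirm its endpoints' degrees are unchanged (hence still SI) except possibly at $u$, where the single modified incidence must be checked against $u$'s retained old neighbour(s). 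Third, I would count: the gadget contributes $7$ vertices and $V$ edges where, by the handshake/cyclomatic bookkeeping $\gamma(G_1)-\gamma(G_0)=(\text{edges added})-(\text{vertices added})=V-7$, so I must verify from the figure that exactly $8$ edges are added, giving $\gamma+1$.

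The main obstacle, and the step most likely to go wrong, is the consistency at the splice vertex $u$: because $u$'s two original neighbours have degree $3$, the only SI-legal degree for $u$ in $G_0$ is $2$ or $4$; it is $2$ by hypothesis, and any gadget attachment that changes the \emph{set of incidences at} $u$ risks pushing $\deg(u)$ to a value incompatible with a degree-$3$ neighbour. So the delicate point is to design the gadget so that after gluing, every neighbour of $u$ (old and new) has degree differing from $\deg_{G_1}(u)$ by exactly one---this is what dictates the local degree pattern of the added block and is the reason the hypothesis about $u$'s neighbours being of degree $3$ is needed rather than an arbitrary degree-$2$ vertex. Once the figure's local structure is pinned down, the remaining verification (edge-by-edge imbalance check and the edge/vertex count for the cyclomatic number) is routine, so I would present it tersely, as the authors do, by appeal to Figure~\ref{graph operation}.
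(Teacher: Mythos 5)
Your overall plan (exhibit an explicit $7$-vertex gadget glued at $u$, check the imbalance edge by edge, and use $\gamma(G_1)-\gamma(G_0)=(\text{edges added})-(\text{vertices added})$ to get $+1$) is exactly the structure of the paper's argument, which consists of nothing more than the gadget in Figure~\ref{graph operation}. But since the proof \emph{is} the gadget, a blind proof has to produce one, and yours never does: the part of your write-up that tries to pin down the splice at $u$ is where the genuine gap lies, and the candidate readings you offer are all wrong. You first propose attaching by a single edge so that $\deg_{G_1}(u)=3$, which is SI-illegal against the old degree-$3$ neighbours (as you yourself notice mid-sentence); you then suggest either that the old neighbours are ``simultaneously modified'' (they are not---the construction only adds vertices and edges, and changing their degrees would break SI inside $G_0$) or that $u$ ``retains degree $2$ with one old neighbour and one new neighbour'' (which would require deleting an old edge and would drop that old neighbour to degree $2$ next to its other degree-$3$ or degree-$2$ neighbours). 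The resolution you never state is the only consistent one: the gadget is attached to $u$ by \emph{two} new edges, so $\deg_{G_1}(u)=4$ and $|4-3|=1$ with both old neighbours---this is precisely why the hypothesis demands $\deg(u)=2$ with degree-$3$ neighbours.

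Concretely, the paper's gadget is: add vertices $a,b,c,d,e,f,g$ and edges $ua,\,ub,\,ac,\,bc,\,ad,\,de,\,bf,\,fg$. Then $\deg(u)=4$, $\deg(a)=\deg(b)=3$, $\deg(c)=\deg(d)=\deg(f)=2$, $\deg(e)=\deg(g)=1$, every new edge and every old edge at $u$'s neighbours has imbalance exactly $1$, and the count is $8$ edges versus $7$ vertices, so the cyclomatic number rises by $1$ (the new cycle being $u\,a\,c\,b\,u$). Your cyclomatic bookkeeping and your edge-by-edge verification scheme are fine and would go through verbatim once this gadget is written down; without it, and with the incorrect local analysis at $u$, the proposal does not constitute a proof.
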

%\begin{proof}
%The graph $G_{1}$ is constructed in the following manner: 
\begin{figure}[h]
	\begin{center}

		\begin{tikzpicture}
		
		vertices
		\draw[fill=black] (1,.3) circle (3pt);
		\draw[fill=black] (2,.75) circle (3pt);
		\draw[fill=black] (1,1.1) circle (3pt);
		\draw (1,0.75) circle (.75cm);
		
		\draw[fill=black] (4,.3) circle (3pt);
		\draw[fill=black] (5,.75) circle (3pt);
		\draw[fill=black] (4,1.1) circle (3pt);
		\draw[fill=black] (5.5,1.5) circle (3pt);
		\draw[fill=black] (5.5,0) circle (3pt);
		\draw[fill=black] (6,0.75) circle (3pt);
		\draw[fill=black] (6,1.5) circle (3pt);
		\draw[fill=black] (6.5,1.5) circle (3pt);
		\draw[fill=black] (6.5,0) circle (3pt);
		\draw[fill=black] (6,0) circle (3pt);
		\draw (4,0.75) circle (.75cm);
		
		vertex labels
		\node at (1,-.3) {$G_0$};
		\node at (5,-.3) {$G_1$};
		\node at (2,1.1) {$u$};
		\node at (4.9,1.1) {$u$};
		edges
		\draw[thick] (1,.3)  --(2,.75) -- (1,1.1) (.9, .8) --(1,1.1) --(.8,1.3) (.7,.3) --(1,.3) --(.9, .6) (4,.3)  --(5,.75) -- (4,1.1) (3.9, .8) --(4,1.1) --(3.8,1.3) (3.7,.3) --(4,.3) --(3.9,.6) (5,.75) --(5.5,1.5) --(6,1.5) --(6.5,1.5) (5.5,1.5) --(6,0.75) --(5.5,0) (5,.75) --(5.5,0) --(6,0) --(6.5,0);
		\end{tikzpicture}
		\caption{The graph considered in Lemma~\ref{graph operation for higher cyclomatic number}}
		\label{graph operation}
	\end{center}
\end{figure}
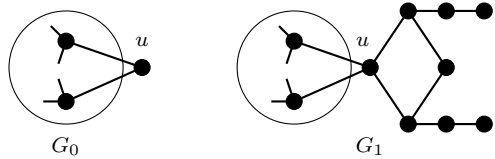

%If the graph $G$ has order $n$, $m$ number of edges and cyclomatic number $\gamma$, then the graph $G_{1}$ constructed above has cyclomatic number $\gamma + 1$. Hence, the lemma is proved.
%\end{proof}

Now by repeated application of Lemma~\ref{graph operation for higher cyclomatic number} we can claim the following  	
\begin{theorem}
	Let $G_0$ be a SI graph of order $n$ and cyclomatic number $\gamma$ , possessing a vertex of degree $2$ having neighbors each of which are of degree $3$. Then for all $ k = 1, 2,...$, there exist SI graphs of order $n + 7k$ and cyclomatic number $\gamma +k$.
\end{theorem}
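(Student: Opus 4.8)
The plan is to argue by induction on $k$, repeatedly invoking Lemma~\ref{graph operation for higher cyclomatic number}. The base case $k=1$ is precisely that lemma: from $G_0$ (which has a vertex of degree $2$ whose two neighbours both have degree $3$) one obtains an SI graph of order $n+7$ and cyclomatic number $\gamma+1$.

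For the inductive step, suppose an SI graph $G_k$ of order $n+7k$ and cyclomatic number $\gamma+k$ has already been produced. To reapply Lemma~\ref{graph operation for higher cyclomatic number}, one must exhibit in $G_k$ a vertex of degree $2$ both of whose neighbours have degree $3$. The key observation I would stress is that the seven-vertex gadget attached in Figure~\ref{graph operation} already contains such a configuration \emph{internally}: in the notation of that figure, the vertex at $(6,0.75)$ has degree $2$, and its two neighbours are the degree-$3$ vertices at $(5.5,1.5)$ and $(5.5,0)$. If at every stage we attach the next gadget at the copy of $(6,0.75)$ coming from the most recently added gadget, then this vertex becomes the new "$u$" (acquiring degree $4$) while its two degree-$3$ neighbours are untouched, and the fresh gadget again provides a new degree-$2$/degree-$3$ configuration. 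Thus the hypothesis of the lemma is self-reproducing along the induction, and verifying that each $G_{k+1}$ is stepwise irregular reduces to the purely local degree check already carried out in the proof of Lemma~\ref{graph operation for higher cyclomatic number} (the only vertex whose degree changes is the attachment vertex, $2\mapsto 4$, and all incident edges join it to degree-$3$ vertices).

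It remains to track the parameters. Each application adds exactly $7$ new vertices, so $|V(G_{k+1})| = |V(G_k)| + 7 = n + 7(k+1)$. Reading off Figure~\ref{graph operation}, the gadget contributes $8$ new edges together with its $7$ new vertices, whence
\[
\gamma(G_{k+1}) = \big(m_k + 8\big) - \big(|V(G_k)| + 7\big) + 1 = \gamma(G_k) + 1 = \gamma + (k+1),
\]
and connectedness is preserved since the gadget is joined to $G_k$ at a single vertex. This completes the induction and yields, for every $k\ge 1$, an SI graph of order $n+7k$ and cyclomatic number $\gamma+k$.

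The only delicate point is the self-reproduction of the hypothesis: one has to make sure the degree-$2$/degree-$3$ vertex used to attach the next gadget lies entirely inside the freshly added gadget (so that later attachments never disturb it), and that choosing it as the new $u$ keeps the whole graph stepwise irregular. Both are immediate from the explicit degrees in Figure~\ref{graph operation}, so beyond this structural remark the argument is just the bookkeeping above.
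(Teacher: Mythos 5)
Your proof is correct and takes essentially the same route as the paper, which simply asserts the result ``by repeated application of Lemma~\ref{graph operation for higher cyclomatic number}''. Your explicit check that the seven-vertex gadget itself reproduces the hypothesis (a degree-$2$ vertex whose two neighbours have degree $3$, untouched by later attachments), together with the $7$-vertex/$8$-edge count giving $\gamma \mapsto \gamma+1$, is exactly the bookkeeping the paper leaves implicit, so nothing further is needed.
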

The next theorem can be shown as a consequence of
Lemma \ref{graph operation for higher cyclomatic number} and Lemma \ref{adding 4 vertices having same cyclomatic number}. In fact it can be  proved by one time application of Lemma~\ref{graph operation for higher cyclomatic number} and $(k-1)$ times application of  Lemma~\ref{adding 4 vertices having same cyclomatic number} to the graph $G_{0}$. 

\begin{theorem}
	Let $G_0$ be a SI graph of order $n$ and cyclomatic number $\gamma$ , possessing a vertex of degree $2$ having neighbors each of which are of degree $3$. Then for all $ k = 1, 2,...$, there exist SI graphs of order $n + 4k+3$ and cyclomatic number $\gamma +1$.
\end{theorem}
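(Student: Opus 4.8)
The plan is to build the desired graph by composing the two constructions already available: Lemma~\ref{graph operation for higher cyclomatic number}, which consumes a vertex of degree $2$ whose two neighbours both have degree $3$ and returns an SI graph with $7$ extra vertices and cyclomatic number increased by one, and Lemma~\ref{adding 4 vertices having same cyclomatic number}, which consumes a pendant vertex and returns an SI graph with $4$ extra vertices and the \emph{same} cyclomatic number. The arithmetic to match is $7 + 4(k-1) = 4k+3$, so I expect exactly one use of the first lemma followed by $(k-1)$ uses of the second.

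First I would dispose of the base case $k=1$: applying Lemma~\ref{graph operation for higher cyclomatic number} once to $G_0$ produces an SI graph $G_1$ of order $n+7 = n + 4\cdot 1 + 3$ and cyclomatic number $\gamma+1$, which is precisely the claim for $k=1$.

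For $k \ge 2$ I would proceed in two stages. Stage one: apply Lemma~\ref{graph operation for higher cyclomatic number} once to $G_0$, obtaining an SI graph $G_1$ of order $n+7$ and cyclomatic number $\gamma+1$. Stage two: apply Lemma~\ref{adding 4 vertices having same cyclomatic number} to $G_1$ a total of $(k-1)$ times; each application adds $4$ vertices and leaves the cyclomatic number unchanged, so the result is an SI graph of order $(n+7) + 4(k-1) = n+4k+3$ and cyclomatic number $\gamma+1$, as required.

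The one point that genuinely needs verification — and the main obstacle in an otherwise routine chaining argument — is that the hypotheses of Lemma~\ref{adding 4 vertices having same cyclomatic number} remain satisfied throughout stage two, i.e.\ that $G_1$ has a vertex of degree $1$ and that each subsequent graph in the chain does as well. This is immediate from the gadget in Figure~\ref{graph operation}, whose appended paths terminate in pendant vertices, together with the fact (visible from the figure accompanying Lemma~\ref{adding 4 vertices having same cyclomatic number}) that that construction again leaves a pendant vertex available for the next step. Once this bookkeeping is in place, the order and cyclomatic-number counts above complete the proof.
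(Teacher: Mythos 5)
Your proposal is correct and is essentially the paper's own argument: the paper proves the theorem by one application of Lemma~\ref{graph operation for higher cyclomatic number} followed by $(k-1)$ applications of Lemma~\ref{adding 4 vertices having same cyclomatic number}, exactly as you describe. Your extra check that the gadget of Figure~\ref{graph operation} leaves pendant vertices available (so the second lemma's hypothesis holds) is a detail the paper leaves implicit, and it is verified correctly.
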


In \cite{SIgraph}, a characterization on the order of the connected bicyclic SI graphs was given by Gutman. In this work, we give a characterization on the order of the connected tricyclic SI graphs.  
\begin{theorem}
	The order of a SI tricyclic graph is an even integer. Moreover,  there exist tricyclic SI graphs whose order is any positive integer except $2,4,6$ and $8$. 
\end{theorem}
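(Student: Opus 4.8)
The plan is to deduce the parity statement from the fact that SI graphs have an even number of edges, to exclude the orders $2,4,6,8$ by first bounding $\Delta(G)$ and then counting edge–endpoints by degree class, and finally to realise every even $n\ge 10$ from two explicit small graphs together with the extension results already proved. For the parity: a connected tricyclic graph has $m=n+2$ edges, and since the number of edges of an SI graph is even~\cite{SIgraph}, $n$ must be even.

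To show that no tricyclic SI graph has order $2,4,6$ or $8$, suppose $G$ were one with $n\le 8$, so that $2m=2(n+2)\le 20$. If $\Delta(G)\le 2$ then $G$ is a path or a cycle and $\gamma(G)\le 1$, so $\Delta:=\Delta(G)\ge 3$; and if $w$ has degree $\Delta$ then $2m=\sum_v d_G(v)\ge d_G(w)+\sum_{u\in nbd(w)}d_G(u)\ge \Delta+\Delta(\Delta-1)=\Delta^2$, so $\Delta\le 4$. If $\Delta=3$, the SI condition together with $1\le d_G(v)\le 3$ forces every degree-$3$ vertex to have all neighbours of degree $2$, every degree-$1$ vertex to have its neighbour of degree $2$, and every degree-$2$ vertex to have neighbours of degree $1$ or $3$; counting edge–endpoints by degree class and using $m=n+2$ then yields $n_3=n_1+4$ and $n=4n_3-6\ge 10$. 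If $\Delta=4$, a degree-$4$ vertex has even degree, so in the bipartition of $G$ all of its neighbours have odd degree, and the SI condition makes them all of degree $3$; since these four neighbours are distinct, $n_3\ge 4$, and the analogous count gives $n_2+n_4=2n_3-2\ge 6$ and $n=2n_2+4n_4-2\ge 12$. In every case $n\ge 10$, a contradiction.

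For existence, it suffices to produce tricyclic SI graphs of orders $10$ and $12$, the latter with a vertex of degree $1$. For $n=10$, subdivide each of the six edges of $K_4$ once: the four original vertices keep degree $3$, the six new vertices have degree $2$, the graph is connected and bipartite, every edge joins degrees differing by exactly $1$, and its cyclomatic number equals $\gamma(K_4)=3$. For $n=12$, take $K_4$ on vertices $w,t_1,t_2,t_3$, subdivide the three edges $t_1t_2,\,t_2t_3,\,t_3t_1$ once each, join $w$ to a new vertex $t_4$, and attach to $t_4$ two pendant paths of length two; then $w$ has degree $4$, the vertices $t_1,t_2,t_3,t_4$ have degree $3$, all remaining vertices have degree $2$ except the two path–ends of degree $1$, every edge joins degrees differing by exactly $1$, the graph is connected, bipartite, has $14$ edges, hence cyclomatic number $3$, and it possesses vertices of degree $1$. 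Now applying the theorem on orders $n+4k+i$ with $i=1$ to the bicyclic SI graph of order $9$ shown in Figure~\ref{bicyclic graph of order 5 and 9}, which has a pendant vertex, yields tricyclic SI graphs of all orders $10+4k$, that is $14,18,22,\dots$; and applying Lemma~\ref{adding 4 vertices having same cyclomatic number} to the order-$12$ graph yields tricyclic SI graphs of all orders $12+4k$, that is $16,20,24,\dots$. Together with the graphs of orders $10$ and $12$, this covers every even integer at least $10$.

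The main obstacle is the $\Delta=4$ case of the impossibility argument. One must first isolate the structural facts that a degree-$4$ vertex is adjacent only to degree-$3$ vertices and that there are at least four of the latter — this uses the SI condition, the bipartite parity structure of SI graphs, and the elementary observation that a maximum-degree vertex has no neighbour of larger degree — after which the bound $n\ge 12$ follows from a short linear computation in $n_1,n_2,n_3,n_4$. Producing the order-$12$ example with a pendant vertex also requires a little care, since the $\Delta=3$ analysis shows that such a graph is necessarily of maximum degree $4$.
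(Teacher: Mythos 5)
Your proof is correct, but it follows a genuinely different route from the paper's. For the existence part, the paper assembles the even orders $\ge 10$ from three sources: Gutman's own tricyclic examples of orders $10$ and $14$, the bicyclic SI graphs of orders $5$ and $9$ fed into the $7$-vertex augmentation of Lemma~\ref{graph operation for higher cyclomatic number} (giving orders $12$ and $16$), and the refined bicyclic result (orders $13+2k$) combined with the $5$-vertex augmentation of Lemma~\ref{graph operation by adding 5 vertices for higher cyclomatic number} (giving all orders $18+2k$). You instead build the base cases explicitly -- the subdivided $K_4$ for order $10$ and the modified $K_4$ with two pendant paths for order $12$ (both check out: degrees, SI condition, $\gamma=3$, and the pendant vertex needed later) -- and then cover $14,18,22,\dots$ via the $n+4k+i$ theorem applied to the order-$9$ bicyclic graph with $i=1$, and $16,20,24,\dots$ via Lemma~\ref{adding 4 vertices having same cyclomatic number} applied to your order-$12$ graph; this is a self-contained alternative that does not lean on Gutman's unexhibited examples or on Lemma~\ref{graph operation for higher cyclomatic number}. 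More importantly, your treatment of the excluded orders is a real addition: the paper simply asserts that tricyclic SI graphs of orders $2,4,6,8$ do not exist, whereas you derive the parity claim from $m=n+2$ and the evenness of $|E|$, bound $\Delta\le 4$ by the neighbour-degree count $2m\ge\Delta^2$, and then rule out $n\le 8$ by the degree-class equations ($n=4n_3-6\ge 10$ when $\Delta=3$, and $n\ge 12$ when $\Delta=4$ using $n_3\ge 4$ and $n_4\ge 1$); I verified these linear computations and they are sound. In short, your argument buys completeness and explicitness where the paper's proof is partly by citation and partly by assertion, at the cost of a longer case analysis.
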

\begin{proof}
	By Theorem~\ref{order of bicyclic graph refined statement}, it can be shown that there exist bicyclic SI graph order $13+2k$, $k \ge 0$. Now by applying Lemma~\ref{graph operation by adding 5 vertices for higher cyclomatic number}, it can further be shown that there exist tricyclic SI graph of order $13+2k+5$, i.e., $18+2k$, $k \ge 0$.
	
	Also, there exist bicyclic SI graph of order $5$ and $9$. Hence, the Lemma \ref{graph operation for higher cyclomatic number} proves the existence of tricyclic SI graph of order $12$ and $16$. 
	
	Gutman has already proved the existence of  tricyclic SI graphs of order $10$ and $14$ in ~\cite{SIgraph}.
	%    where the tricycle SI graph of order $14$ has a pendant vertex. Hence, using Lemma~\ref{adding 4 vertices having same cyclomatic number}, there exist tricyclic SI graph of order $14+4=18$. 
	
	On the other hand, such tricyclic SI graph of order $2,4,6$ and $8$ do not exist. 
\end{proof}

In the next result, we show that each integer between the minimum and maximum degree of a SI graph, must be a degree of some vertex of the SI graph.  

\begin{theorem}
	Let $G$ be a connected SI graph with maximum degree $\Delta(G)$ and minimum degree $\delta(G)$. Then every number $k$,  $\delta(G) \le k \le \Delta(G)$ belongs to the degree set  $d(G)$
	%  sequence 
	of $G$.
\end{theorem}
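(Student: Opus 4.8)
The plan is to combine connectivity of $G$ with the defining $\pm 1$ property of SI graphs along a path, and then invoke a discrete intermediate value principle. Concretely, I would fix a vertex $u$ with $d_G(u) = \delta(G)$ and a vertex $w$ with $d_G(w) = \Delta(G)$; if $\delta(G) = \Delta(G)$ there is nothing to prove, so assume $\delta(G) < \Delta(G)$. Since $G$ is connected there is a path $u = v_0, v_1, \dots, v_\ell = w$ in $G$, and because $G$ is stepwise irregular we have $|d_G(v_{i+1}) - d_G(v_i)| = 1$ for every $i = 0, 1, \dots, \ell-1$. Hence the integer sequence $d_G(v_0), d_G(v_1), \dots, d_G(v_\ell)$ starts at $\delta(G)$, ends at $\Delta(G)$, and changes by exactly one at each step.

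The remaining ingredient is the elementary fact that a finite integer sequence with consecutive differences of absolute value $1$ attains every integer value lying between its first and last term. I would prove this by a one-line contradiction (or equivalently by induction on $\ell$): if some $k$ with $\delta(G) < k < \Delta(G)$ were not of the form $d_G(v_i)$, let $j$ be the largest index with $d_G(v_j) < k$; such $j$ exists and satisfies $j < \ell$. By maximality of $j$ and the assumption $k \notin \{d_G(v_i) : 0 \le i \le \ell\}$ we get $d_G(v_{j+1}) \ge k+1$, whereas $d_G(v_{j+1}) \le d_G(v_j) + 1 \le k$, a contradiction. Therefore every $k$ with $\delta(G) \le k \le \Delta(G)$ equals $d_G(v_i)$ for some $i$, i.e. $k \in d(G)$.

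There is no genuine obstacle in this argument; the only points needing a little care are the degenerate regular case $\delta(G) = \Delta(G)$ and a clean statement of the discrete intermediate value lemma. As a byproduct one even obtains that all of the values $\delta(G), \delta(G)+1, \dots, \Delta(G)$ occur as vertex degrees along a single path of $G$, namely any path joining a minimum-degree vertex to a maximum-degree vertex.
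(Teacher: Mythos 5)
Your proof is correct, and it takes a somewhat different route from the paper's. The paper argues by contradiction via a vertex partition: assuming some value $p$ with $\delta(G) < p < \Delta(G)$ is missing from the degree set, it considers the set $S$ of all vertices of degree greater than $p$ and observes that, because adjacent degrees differ by exactly one and $p$ is unattained, no vertex of $S$ can be adjacent to a vertex of degree less than $p$; this splits $G$ into two nonempty parts with no edges between them, contradicting connectedness. You instead fix one minimum-degree and one maximum-degree vertex, take a path between them, and apply a discrete intermediate value argument to the degree sequence along that path, which you prove cleanly by the ``largest index with $d_G(v_j)<k$'' trick. Both arguments rest on the same two ingredients (connectivity plus the $\pm 1$ property), but yours is more local and self-contained: the paper's write-up is a bit sketchy in the case $p+1\notin d(G)$ (``by a finite number of steps we can always find\ldots''), whereas your lemma handles all missing values uniformly, and you get the extra conclusion that all degrees $\delta(G),\delta(G)+1,\dots,\Delta(G)$ are realized along a single path joining a minimum-degree vertex to a maximum-degree vertex. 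The paper's cut-style argument, on the other hand, makes transparent exactly why connectivity is the essential hypothesis. Your handling of the degenerate case $\delta(G)=\Delta(G)$ is fine and costs nothing.
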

\begin{proof}
	Suppose $G$ be a connected SI graph with maximum degree $M$ and minimum degree $m$. Clearly $m$ and $M$ belong to $d(G)$. 
	
	Now if possible let there exists $p$, $m < p < M$ such that $p \notin d(G)$. 
	
	If $p+1 \in d(G)$, set $S$ be the set of all vertices whose degrees are greater than $p$. Then none of these vertices are connected to any vertices of degree less than $p$. This implies that $G$ is disconnected - a contradiction. 
	
	If $p+1 \notin d(G)$, by a finite number of steps we can always find a $k$ such that $p+k \le M$ and $p+k \in d(G)$. Using the same argument above, it can be shown that $G$ is disconnected - a contradiction. 
	
	Hence our assumption is wrong and the theorem is proved. 
\end{proof}

\textbf{Note:} There exist SI graphs which have only vertices of degree $\Delta(G)$ and $\delta(G)$.  The complete bipartite  graph $G = k_{m,m+1}, m \geq 1$ is an example of such kind of graphs.
%which are also called as Bidegreed graphs. 

\section{SI graph under elementary graph operations}
\label{SI graph under elementary graph operations}

In this section we investigate the SI property of graphs under several graph operations such as vertex (edge resp.) deleted subgraph, compliment graph, line and total graphs etc.

For a given graph $G$ and a vertex $v \in V(G)$, the 
vertex  deleted 
subgraph  $G-v$ is obtained by deleting the vertex $v$ and its adjacent edges from $G$. Similarly, for a given graph $G$ and an edge $e \in E(G)$, the 
edge deleted 
subgraph $G-e$ is obtained by deleting the edge $e$ from $G$. 

After deleting either a vertex or an edge from a given connected graph we may get disconnected graph. A disconnected graph $G$ is said to be SI if each component of $G$ is SI. 

We show in the next result that edge deletion does not preserve the property of stepwise irregularity.
\begin{theorem}
	Let $G$ be a SI graph. Then for any  edge $e$  of $G$, $G-e$ is  no longer SI. 
\end{theorem}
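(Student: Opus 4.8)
The plan is to show that, for an arbitrary edge $e = uv$ of a SI graph $G$, the graph $G - e$ always contains a single edge whose two endpoints already violate the stepwise-irregularity condition; this forces $G-e$ (or, if $G-e$ is disconnected, the component containing that edge) not to be SI.

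First I would invoke the defining property of $G$ at the edge $e$ itself: since $G$ is SI we have $|d_G(u) - d_G(v)| = 1$, so the two endpoints of $e$ have distinct degrees, and I may relabel them so that $d_G(v) = d_G(u) + 1$. As $u$ is incident to $e$, $d_G(u) \ge 1$, and hence $d_G(v) \ge 2$. This is the key point: the larger endpoint $v$ of $e$ has degree at least two in $G$, so $v$ has a neighbour $w$ with $w \ne u$; since the graph is simple, $vw \ne uv = e$, so the edge $vw$ survives in $G - e$.

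Next I would track the degrees under the deletion. Removing $e$ decreases $d(v)$ by one and leaves $d(w)$ unchanged (because $w \ne u$), so $d_{G-e}(v) = d_G(v) - 1$ and $d_{G-e}(w) = d_G(w)$. Applying the SI property of $G$ to the edge $vw$ gives $d_G(w) = d_G(v) + 1$ or $d_G(w) = d_G(v) - 1$; substituting into $|d_{G-e}(v) - d_{G-e}(w)|$ yields the value $2$ in the first case and $0$ in the second. In either case the endpoints of the edge $vw$ of $G-e$ differ in degree by something other than $1$.

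Finally I would conclude that the connected component of $G - e$ containing $vw$ fails the stepwise-irregularity condition, hence by the convention recorded above (a disconnected graph is SI iff every component is) $G-e$ is not SI. I do not foresee a genuine obstacle: the only points needing care are the existence of the auxiliary neighbour $w$, which is precisely why I single out the larger-degree endpoint of $e$, and the possibility that $G-e$ is disconnected, which the stated convention handles. A one-line check on $P_3$ (which is SI with degree sequence $1,2,1$, yet $P_3 - e$ contains an edge whose endpoints have equal degree) can be included as a sanity check.
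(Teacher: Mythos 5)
Your proof is correct and follows essentially the same route as the paper's: delete $e$, track the degree drop at its endpoints, and exhibit an edge adjacent to $e$ whose endpoint degrees in $G-e$ differ by $0$ or $2$. If anything, your version is slightly tighter, since you justify the existence of that adjacent edge explicitly (the larger-degree endpoint $v$ of $e$ satisfies $d_G(v)\ge 2$), whereas the paper only asserts it from $G$ having at least two edges.
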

\begin{proof}
	Let $e=xy$ be an edge of a SI graph $G$ and $H=G - e$.  Also we have the order of $G$ at least $3$. 
	
	Consider two adjacent vertices $u$ and $v$ in $H$. Then these $u$ and $v$ are also in $G$.  Therefore we have $d_G(u)= d_G(v)\pm 1$. 
	
	There may arise two cases. 
	\begin{enumerate}
		\item Neither  $u$ nor $v$ is $x$ or $y$. Then $|d_H(u)- d_H(v)|=|d_G(u)-d_G(v)| =1 $.
		
		\item Either  $u$ or $v$ (but not both, since we remove the edge $e=xy$ ) is $x$ or $y$. Without loss of generality let $u=x$ and $v \in V(G) - \lbrace x, y \rbrace$. Then  $|d_H(u)- d_H(v)|=|d_G(u)- 1-d_G(v)| = |d_G(v)\pm 1 - 1-d_G(v)| = 0 \mbox{ or } 2$. 
	\end{enumerate}
	Since the SI graph $G$ has at least $2$ edges, there must exist an edge which falls under the second case. 
	
	By the second case it follows that $H$ is not SI. 
\end{proof}
Now in a similar manner  we show that, vertex deletion does not preserve the property of stepwise irregularity.
\begin{theorem}
	Let $G$ be a SI graph. Then for any vertex  $v$ of $G$,  $G-v$ is  not SI.
\end{theorem}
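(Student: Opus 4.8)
The plan is to follow the template of the preceding theorem on edge deletion. Fix a vertex $v$ of the SI graph $G$ and put $H=G-v$; removing $v$ decreases $d_G(w)$ by exactly one for every $w\in nbd(v)$ and leaves every other degree unchanged. So I would look for an edge of $H$ of the form $wz$ with $w\in nbd(v)$ and $z\notin nbd(v)\cup\{v\}$: then $d_H(w)=d_G(w)-1$, $d_H(z)=d_G(z)$, and since $G$ is SI we have $d_G(w)=d_G(z)\pm1$, so that $|d_H(w)-d_H(z)|=|d_G(z)\pm1-1-d_G(z)|\in\{0,2\}$. Exhibiting one such edge shows that the component of $H$ carrying it is not SI, hence $H$ itself is not SI.

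Everything thus reduces to producing this witness edge, and I would argue by a dichotomy on the degrees of the neighbours of $v$. If some $w\in nbd(v)$ has $d_G(w)\ge2$, then $w$ has a neighbour $z\ne v$; moreover $z\notin nbd(v)$, for otherwise $v,w,z$ would span a triangle, contradicting the fact recalled in Section~\ref{Basic results} that SI graphs are bipartite and hence triangle-free. Thus $z\notin nbd(v)\cup\{v\}$, the edge $wz$ lies in $H$, and the estimate above applies. In the remaining case every neighbour of $v$ has degree $1$; since $v$ has at least one neighbour and $G$ is SI, $|1-d_G(v)|=1$ forces $d_G(v)=2$, so the component containing $v$ is $P_3$ with $v$ as its central vertex, and, $G$ being connected, $G=P_3$.

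This last configuration is the only genuine obstacle, and I would dispose of it by hand: $P_3-v$ is a pair of isolated vertices, a graph with no edges, which is regular and hence not SI (equivalently, it is excluded by the convention under which ``stepwise irregular'' presupposes ``irregular''), so the conclusion holds here as well. Once a neighbour of $v$ of degree at least $2$ is available, bipartiteness supplies the required non-neighbour $z$ at no cost and the one-line degree computation closes the argument exactly as in the edge-deletion case; the only care needed is in settling---or explicitly excluding---the degenerate situation in which $v$ is adjacent solely to pendant vertices.
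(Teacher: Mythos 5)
Your proof is correct and follows essentially the same route as the paper: the same degree computation for an edge of $G-v$ joining a neighbour of $v$ to a non-neighbour, with bipartiteness (triangle-freeness) guaranteeing that the second endpoint lies outside $nbd(v)$. You are in fact more careful than the paper, which silently assumes such a witness edge exists; your dichotomy on the degrees of the neighbours of $v$, and the explicit handling of the degenerate case $G=P_3$ (resolved modulo the convention that an edgeless, regular graph is not SI), close a gap the published proof leaves open.
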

\begin{proof}
	Suppose $G$ be a SI graph and $v$ be a vertex of $G$. Let $H=G - v$.
	
	Consider two adjacent vertices $x$ and $y$ in $H$. Then these $x$ and $y$ are also in $G$.  Therefore we have $d_G(x)= d_G(y)\pm 1$.  There may arise two cases. 
	\begin{enumerate}
		\item 	If $x, y \in V(G)-nbd(v)$, $|d_H(x)- d_H(y)|=|d_G(x)- d_G(y)|=1$.
		
		\item Either of $x$ or $y$ (but not both) is $v$ ( it is because SI graphs are bipartite and if they are both in $nbd(v)$, $\lbrace v, x, y\rbrace$ will form a triangle ). 
		
		Without loss of generality let $x\in nbd(v)$ and $y \in V(G)- nbd(v)$. Then  $|d_H(x)- d_H(y)|=|d_G(x)- 1-d_G(y)| = |d_G(y)\pm 1 - 1-d_G(y)| = 0 \mbox{ or } 2$. 
	\end{enumerate}
	By the second case it follows that $H$ is not SI. 
	
\end{proof}
Moreover,  in the next result we show that like in the case of vertex and edge deletion, the complementation operation also does not preserve stepwise irregularity. 
\begin{lemma}
	Let $G$ be a connected SI graph, then $\overline{G}$, the compliment graph of $G$ is not SI.
\end{lemma}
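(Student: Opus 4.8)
The plan is to exploit two elementary facts: any graph on $n\ge 2$ vertices has two vertices of equal degree, and in an SI graph two adjacent vertices always have different degrees. So I would first dispose of the degenerate possibility $G=K_n$: a complete graph is regular with at least one edge (for $n\ge 2$), so its two adjacent vertices have equal degree and $K_n$ is not SI; hence a connected SI graph $G$ is never complete, $\overline{G}$ has at least one edge, and $n\ge 3$. Then, since $d_G(w)\in\{0,1,\dots,n-1\}$ for every vertex $w$, while $0$ and $n-1$ cannot both occur as degrees, the pigeonhole principle produces two distinct vertices $u,v$ with $d_G(u)=d_G(v)$.

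Next I would show $uv\in E(\overline{G})$: if instead $uv\in E(G)$, the SI hypothesis would give $|d_G(u)-d_G(v)|=1$, contradicting $d_G(u)=d_G(v)$; hence $u$ and $v$ are non-adjacent in $G$, i.e.\ adjacent in $\overline{G}$. Using the identity $d_{\overline{G}}(w)=n-1-d_G(w)$ valid for every vertex $w$, I get $d_{\overline{G}}(u)=n-1-d_G(u)=n-1-d_G(v)=d_{\overline{G}}(v)$, so the edge $uv$ of $\overline{G}$ joins two vertices of the same degree, whence $|d_{\overline{G}}(u)-d_{\overline{G}}(v)|=0\neq 1$.

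Finally I would reconcile this with the convention that a disconnected graph is SI exactly when each of its components is SI: the edge $uv$ lies inside a single connected component $C$ of $\overline{G}$, and the degree of a vertex is the same whether computed in $C$ or in $\overline{G}$, so $C$ contains an edge whose two endpoints have equal degree. Therefore $C$ is not SI, and consequently $\overline{G}$ is not SI either, which proves the lemma.

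I do not anticipate a real obstacle here; the only points requiring a little care are ruling out $G=K_n$ (so that $\overline{G}$ is not the vacuously-SI edgeless graph) and the brief bookkeeping for the possibly-disconnected complement, both routine. An alternative route would invoke the earlier theorem that every integer in $[\delta(G),\Delta(G)]$ occurs as a degree of $G$, but the pigeonhole argument above is shorter and entirely self-contained.
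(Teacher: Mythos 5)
Your proof is correct and takes essentially the same route as the paper: both arguments locate two vertices of equal degree that are non-adjacent in $G$ (non-adjacency forced by the SI condition) and observe that in $\overline{G}$ they become adjacent with equal degree $n-1-k$, violating stepwise irregularity. You merely make explicit what the paper leaves implicit, namely the pigeonhole justification for two equal degrees, the exclusion of $K_n$, and the component-wise bookkeeping when $\overline{G}$ is disconnected.
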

\begin{proof}
	Let $G$ be a connected SI graph of order $n$ 
	with maximum degree $M$ and minimum degree $m$. Then there must exist at least two vertices $u, v$ of same degree $k$, for some $k$, $m \le k \le M$,  in $G$, which are not adjacent to each other. So the vertices $u$ and $v$ are adjacent in $\overline{G}$ having same degree $(n-1-k)$. 
	Hence $\overline{G}$ is not SI. 
\end{proof}

\subsection{Subdivision graph of SI graphs}
\label{Subdivision graph of SI graphs}

The subdivision graph~\cite{Harary1969} of a graph $G$ is denoted by $S(G)$ and is obtained from $G$ by inserting a new vertex in each of the edges of $G$.
%\begin{theorem}
%Let $G$ be a $k_{4}$ graph. The subdivision graph $S(G)$ is SI.
%\end{theorem}
%\begin{proof}

%\end{proof}

%\begin{cor}
%For any complete graph $k_{n}, n \neq 4$, the subdivision graph $S(G)$ is not SI.
%\end{cor}

\begin{theorem}
	If $G$ is a SI graph, then  the subdivision graph $S(G)$ is not SI.
\end{theorem}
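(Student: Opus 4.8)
The plan is to read off how vertex degrees transform under subdivision. In $S(G)$ each original vertex $v$ keeps its degree: it is now joined exactly to the $d_G(v)$ new vertices placed on its incident edges, so $d_{S(G)}(v)=d_G(v)$. Every newly inserted vertex, on the other hand, has degree exactly $2$. Crucially, by construction every edge of $S(G)$ joins an original vertex of $G$ to a subdivision vertex. Consequently $S(G)$ is SI if and only if $|d_G(v)-2|=1$ for every $v\in V(G)$, that is, if and only if every vertex of $G$ has degree $1$ or $3$.

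So the theorem reduces to the claim that a SI graph cannot have all of its vertex degrees in $\{1,3\}$, and this is immediate from the defining property of SI graphs. Since $G$ is connected of order at least $3$ (Theorem~\ref{existence of SI graph refined statement}), it has an edge $uv$; if $d_G(u),d_G(v)\in\{1,3\}$ then $|d_G(u)-d_G(v)|\in\{0,2\}$, contradicting $|d_G(u)-d_G(v)|=1$. Hence $G$ contains a vertex $v$ with $d_G(v)\notin\{1,3\}$. The edge of $S(G)$ joining $v$ to one of its adjacent subdivision vertices then has endpoints of degrees $d_G(v)$ and $2$ with $|d_G(v)-2|\neq 1$, so $S(G)$ is not SI.

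I do not expect a genuine obstacle here; the argument is essentially a degree bookkeeping plus one line using the SI condition. The only point deserving a word of care is the degenerate edgeless case, which is ruled out by connectedness and order $\ge 3$. If one prefers a more hands-on write-up in the style of the earlier vertex/edge-deletion proofs, one can split into cases: if $G$ has a vertex of degree $2$, its image in $S(G)$ is adjacent to a degree-$2$ subdivision vertex (imbalance $0$); otherwise a pendant vertex of $G$ would force a degree-$2$ neighbour, so $\delta(G)\ge 3$, and since a $3$-regular graph has every edge of imbalance $0$ and hence is not SI, $G$ has a vertex of degree $\ge 4$, whose image in $S(G)$ is adjacent to a degree-$2$ vertex (imbalance $\ge 2$). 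I would, however, present the uniform argument above, as it is shorter and avoids the case analysis.
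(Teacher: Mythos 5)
Your argument is correct and is essentially the paper's own proof: both rest on the observation that the inserted vertex has degree $2$ while original vertices keep their degrees, and that the SI condition $|d_G(u)-d_G(v)|=1$ prevents both endpoints of an edge from having degree in $\{1,3\}$, so some edge of $S(G)$ incident to a subdivision vertex has imbalance $\ne 1$. Your write-up just makes explicit the reduction ("$S(G)$ is SI iff all degrees of $G$ lie in $\{1,3\}$") that the paper leaves implicit.
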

\begin{proof}
	Let $G$ be a SI graph and $e=uv$ be an edge of $G$. Then $d_{G}(u)=d_{G}(v)\pm 1$. Now in the subdivision graph $S(G)$, a new vertex $w$ (say) is inserted in $e$. We see that $d(w)= 2$ in $S(G)$. %It can be verified that 
	Hence, either $|d_{S(G)}(u)-d_{S(G)}(w)| \ne 1$ or $|d_{S(G)}(v)-d_{S(G)}(w)| \ne 1$. This implies that $S(G)$ is not SI. 
\end{proof}

\begin{example}\label{3 regular SI}
	%Subdivision graph of a SI graph is not SI but
	There exist non-SI graphs whose subdivision graph is SI. 
	%Let $G$ be 
	A $3$-regular graph is an example of such kind of graph. 
	%Then the subdivision graph $S(G)$ is SI.
\end{example}
Moreover, we can show that there exist only three types of graph for which the subdivision graph can be a SI graph.
\begin{theorem}
	The subdivision graph of any connected graph other than $K_2$, $K_{1,3}$ and $3$-regular graphs, is not SI. 
\end{theorem}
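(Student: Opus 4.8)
The plan is to first reduce the statement to a condition on the degree sequence of $G$ alone, and then to classify the connected graphs meeting that condition. The reduction is essentially the argument already used to show $S(G)$ is not SI in general, now pushed to an exact criterion rather than just an impossibility.

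First I would record the structural facts about $S(G)$: each vertex of $S(G)$ coming from $V(G)$ retains its degree $d_G(v)$, each newly inserted vertex has degree exactly $2$, and every edge of $S(G)$ joins an old vertex to a new one (the subdivided edge $uv$ becomes a path $u-w_{uv}-v$, so there are no old--old or new--new edges). Hence $S(G)$ is SI if and only if $|d_G(v)-2|=1$ for every vertex $v$ of $G$ incident with an edge; since $G$ is connected with at least two vertices (the trivial graph $K_1$ gives an edgeless, vacuous $S(G)$), this is exactly the condition $d_G(v)\in\{1,3\}$ for all $v\in V(G)$. So it remains to prove: a connected graph all of whose vertex degrees lie in $\{1,3\}$ must be $K_2$, $K_{1,3}$, or $3$-regular.

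Next I would split on which degrees actually occur. If only degree $1$ occurs, connectedness forces $G=K_2$. If only degree $3$ occurs, $G$ is by definition $3$-regular. The interesting case is when both degrees $1$ and $3$ appear; here one first notes that no leaf can be adjacent to another leaf (that would split off a $K_2$ component), so every leaf hangs from a degree-$3$ vertex, and I would then pick a degree-$3$ vertex $v$, count its leaf-neighbours versus its degree-$3$ neighbours, and try to force $v$ to be the centre of a $K_{1,3}$ that exhausts the whole graph.

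The main obstacle --- and the step I would scrutinise most carefully before trusting it --- is exactly this mixed-degree case. To get $K_{1,3}$ one must exclude \emph{every} connected graph possessing both a degree-$1$ and a degree-$3$ vertex other than the star, and there are very natural candidates that resist exclusion: for instance two degree-$3$ vertices joined by an edge and each carrying two pendant vertices, or a triangle with one pendant vertex attached at each corner; both already have all degrees in $\{1,3\}$. So the real content of the theorem is whether such ``exceptional'' configurations genuinely cannot arise; if they can, the hypothesis would need strengthening (e.g. to forbid all graphs with $d_G(v)\in\{1,3\}$ for every $v$). Granting the classification, the theorem follows immediately from the degree criterion of the first step, but I would expect the classification itself to be where the difficulty, and possibly a needed correction, lies.
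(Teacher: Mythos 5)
Your reduction is exactly the one the paper uses implicitly: every edge of $S(G)$ joins an original vertex $v$ (of degree $d_G(v)$) to an inserted vertex of degree $2$, so for a connected $G$ with at least one edge, $S(G)$ is SI precisely when every vertex of $G$ has degree $1$ or $3$. Where you and the paper part ways is the classification step, and your suspicion is justified. The paper's proof simply asserts that any connected graph other than $K_2$, $K_{1,3}$ and the $3$-regular graphs has a vertex of degree different from $1$ and $3$; this assertion is false, and the configurations you name are genuine counterexamples. For the double star (two adjacent vertices of degree $3$, each carrying two pendant neighbours) all degrees lie in $\{1,3\}$, so every edge of its subdivision joins a vertex of degree $2$ to one of degree $1$ or $3$ and the subdivision is SI; the same holds for the triangle with a pendant vertex at each corner (that $G$ itself is not bipartite is irrelevant, since $S(G)$ is always bipartite, and $S(G)$ has $2m$ edges, so the even-size condition for SI graphs is also met). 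Hence the theorem as stated is incorrect, and your inability to close the mixed-degree case is not a gap in your argument but a flaw in the statement itself: the correct characterization is exactly your degree criterion, namely that $S(G)$ is SI if and only if every vertex of the connected graph $G$ has degree $1$ or $3$, a class that properly contains $K_2$, $K_{1,3}$ and the $3$-regular graphs.
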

\begin{proof}
	If  $G= K_2$, $S(G)= K_{1,2}$ which is SI. If $G= K_{1,3}$, it can be directly verified that $S(G)$ is SI. By Example~\ref{3 regular SI}, the subdivision graph of $3$-regular graph is SI. 
	
	Assume that $G$ is a graph other than $K_2$, $K_{1,3}$ and $3$-regular graphs. Then there exists at least one edge $uv \in E(G)$ whose one of adjacent vertices has degree other than $1$ and $3$.
	In fact let the vertex  $u \in V(G)$ be such that $d(u)\ne 1, 3$. Now in the subdivision graph $S(G)$, a new vertex $w$ (say) is inserted onto this edge and the degree of this new vertex is $2$ which is also adjacent to the vertex $u$. Clearly, $|d(u)-d(w)| \ne 1$. Therefore $S(G)$ can not be SI.
\end{proof}

%\begin{theorem}
%Let $G$ be a complete bipartite graph $k_{3,3}$. Then the subdivision graph $S(G)$ is SI. Moreover, this result is not true for any other complete bipartite graph $k_{n,m}$ where $n, m \neq 3$.
%\end{theorem}

\subsection{Line and total graph of SI graph}

The line graph $L(G)$ of a graph $G$ %there exist a vertex corresponding to each 
is the graph whose vertices correspond to the edges of $G$ and any two vertices in $L(G)$ are adjacent if and only if the edges corresponding to the vertices are adjacent in $G$ \cite{Harary1969}. 
Moreover, if $e = uv$ is an edge of $G$ then $deg_{ L(G)} (e) = deg_{G} (u)+ deg_{G} (v) - 2$. % where $N(u)$ denotes neighbours of $u$, i.e., the set of vertices which are adjacent to the vertex $u$. (we are not using N(u) any where)

\begin{theorem}\label{line graph not si}
	
	Let $G$ be a SI graph, then $L(G)$, the line graph of $G$, is not SI. 
\end{theorem}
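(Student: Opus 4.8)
The plan is to pick a cleverly chosen edge of $G$ and look at the degrees of its neighbours in the line graph. Recall that if $e = uv$ is an edge of $G$, then $\deg_{L(G)}(e) = \deg_G(u) + \deg_G(v) - 2$. Since $G$ is SI, for the edge $uv$ we have $\deg_G(u) = \deg_G(v) \pm 1$, so $\deg_{L(G)}(e) = 2\min\{\deg_G(u),\deg_G(v)\} - 1$, which is always \emph{odd}. This is the structural fact I want to exploit: every vertex of $L(G)$ whose two endpoints (in $G$) are adjacent — i.e.\ \emph{every} vertex of $L(G)$ — has odd degree. Wait: more carefully, I should note that $\deg_{L(G)}(e)$ for $e=uv$ equals $\deg_G(u)+\deg_G(v)-2$, and since one of $\deg_G(u),\deg_G(v)$ is even and the other odd, the sum is odd, hence $\deg_{L(G)}(e)$ is odd for \emph{every} edge $e$ of $G$. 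So all vertices of $L(G)$ have odd degree.

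First I would handle the trivial small cases: if $G$ has only one edge then $L(G) = K_1$, which is (vacuously) not something we need to worry about, and the smallest SI graphs have at least two edges by Lemma~1 (even number of edges), so $L(G)$ has at least two vertices. Next, take any vertex $u$ of $G$ with $\deg_G(u) = d \ge 2$ (such a vertex exists since $G$ is connected with at least two edges and SI graphs have vertices of at least two distinct degrees). The $d$ edges incident to $u$ form a clique $K_d$ in $L(G)$, and within this clique every pair of corresponding vertices is adjacent. Pick two edges $e = uv$ and $f = uw$ at $u$; then $e$ and $f$ are adjacent in $L(G)$, and $\deg_{L(G)}(e) = \deg_G(u) + \deg_G(v) - 2$ while $\deg_{L(G)}(f) = \deg_G(u) + \deg_G(w) - 2$, so $|\deg_{L(G)}(e) - \deg_{L(G)}(f)| = |\deg_G(v) - \deg_G(w)|$, which is even (both $\deg_G(v)$ and $\deg_G(w)$ differ from $\deg_G(u)$ by exactly $1$, hence have the same parity). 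So this difference is $0$ or $2$ or more, never $1$; if I can exhibit two such edges, $L(G)$ fails the SI condition on the edge $ef$.

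The one gap in the argument above is the case where \emph{every} vertex $u$ of degree $\ge 2$ is incident to at most one edge — impossible, since degree $\ge 2$ means at least two incident edges — so in fact any vertex of degree $\ge 2$ supplies the needed pair $e,f$. Thus the only thing to rule out is that $G$ has \emph{no} vertex of degree $\ge 2$, i.e.\ $G \cong K_2$; but $K_2$ is not SI (it is regular), contradiction. Hence the main obstacle is really just the bookkeeping: confirming that a SI graph with at least one edge always contains a vertex of degree at least $2$ incident to two edges whose other endpoints have degrees differing from it by $1$ (automatically true), and assembling these observations. A cleaner alternative, which I would present as the actual proof, is: every vertex of $L(G)$ has odd degree (shown above), so in particular $L(G)$ cannot contain an edge between a vertex of even degree and a vertex of odd degree; but a graph with all degrees odd and at least one edge certainly has an edge, and both its endpoints have odd degree, so their degree-difference is even — and one checks it is not $1$ by the clique argument, or simply observes that two odd numbers differ by an even amount, which is never $1$. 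That single parity sentence, combined with the existence of at least one edge in $L(G)$, finishes the proof; I expect no real difficulty beyond stating it carefully.
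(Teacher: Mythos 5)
Your proof is correct and follows essentially the same route as the paper: take two edges $e=uv$, $f=uw$ sharing the vertex $u$, note they are adjacent in $L(G)$, and compute $|d_{L(G)}(e)-d_{L(G)}(f)|=|d_G(v)-d_G(w)|$, which is even (hence $0$ or $2$, never $1$) since both $d_G(v)$ and $d_G(w)$ differ from $d_G(u)$ by exactly $1$. Your alternative parity phrasing (every vertex of $L(G)$ has odd degree, so adjacent vertices of $L(G)$ differ by an even amount) is just a repackaging of the same observation, and in fact matches the paper's separate follow-up theorem that all degrees in $L(G)$ are odd.
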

\begin{proof}
	Let $G$ be a SI graph. Then $G$ has at least two edges. Let $e_1=uv$ and $e_2=uw$ be two adjacent edges in $G$.  Then we have, $d_{G}(v)=d_{G}(u)\pm 1$ and $d_{G}(w)=d_{G}(u)\pm 1$.
	
	Since $e_1$ and $e_2$ are adjacent in $G$, they are adjacent in $L(G)$. We also have, $d_{L(G)}(e_1)=d_{G}(u)+d_{G}(v)-2$ and $d_{L(G)}(e_2)=d_{G}(u)+d_{G}(w)-2$. 
	
	Therefore $|d_{L(G)}(e_1)-d_{L(G)}(e_2)|=|d_{G}(v)-d_{G}(w)|= |(d_{G}(u)\pm 1)-(d_{G}(u)\pm 1)|= 0 \mbox{ or } 2$. This implies that $L(G)$ is not SI. 
\end{proof}

\textbf{Note:} There exist non-SI graphs whose line graph can be SI. The graph $P_{4}$ is not an SI graph, but the line graph of $P_{4}$ is $P_{3}$ which is a SI graph. 

In fact, in 
the following result we show that $P_4$ is the only graph whose line graph is a SI graph.
\begin{theorem}
	$G=P_4$ if and only if $L(G)$ is a SI graph.
\end{theorem}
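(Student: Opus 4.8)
The plan is to prove the two implications separately; the real work lies in showing that $L(G)$ being SI forces $G=P_4$. The reverse implication is immediate: if $G=P_4$ with edges $e_1,e_2,e_3$ taken in order along the path, then in $L(G)$ the pairs $e_1e_2$ and $e_2e_3$ are adjacent while $e_1e_3$ is not, so $L(G)\cong P_3$, whose degree sequence is $1,2,1$; every edge of $P_3$ joins a vertex of degree $1$ to one of degree $2$, and hence $L(G)$ is SI.

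For the converse I would assume $L(G)$ is SI and aim at $G=P_4$. The key step is to bound $\Delta(G)$. Suppose some vertex $v$ has $d_G(v)\ge 3$ and pick three distinct edges $e_1,e_2,e_3$ incident with $v$. Each pair $e_i,e_j$ shares $v$, hence is adjacent in $L(G)$, so $\{e_1,e_2,e_3\}$ spans a triangle in $L(G)$; but SI graphs are bipartite (see Section~\ref{Basic results}) and therefore triangle-free, a contradiction. (Alternatively, writing $e_i=vu_i$ and using $d_{L(G)}(e_i)=d_G(v)+d_G(u_i)-2$, stepwise irregularity of $L(G)$ would force $|d_G(u_i)-d_G(u_j)|=1$ for all $i\neq j$, which is impossible for three integers since two of them must have the same parity.) Hence $\Delta(G)\le 2$, so the connected graph $G$ is either a path $P_n$ or a cycle $C_n$.

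It then remains to rule out the cycles and all but one path, which is a short finite check. If $G=C_n$ then $L(G)\cong C_n$, which is $2$-regular and hence not SI; so $G=P_n$ and $L(G)\cong P_{n-1}$. For $n-1\ge 4$ the path $P_{n-1}$ contains two adjacent interior vertices of degree $2$, so that edge has imbalance $0$ and $P_{n-1}$ is not SI; the graphs $P_1$ and $P_2$ (the line graphs of $P_2$ and $P_3$) are regular and hence not SI; and $P_3$ is SI by the computation in the first paragraph. Therefore $n-1=3$, i.e.\ $G=P_4$.

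The main obstacle is not any single calculation but the careful handling of graphs with very few edges, which relies on the convention that a regular graph is not stepwise irregular — this is precisely what excludes $G=P_2$ and $G=P_3$, whose line graphs $K_1$ and $K_2$ are regular. The isomorphisms $L(C_n)\cong C_n$ and $L(P_n)\cong P_{n-1}$ invoked above are standard. Everything after the maximum-degree bound is elementary, so that bound — together with its reliance on the bipartiteness of SI graphs — is the crux of the argument.
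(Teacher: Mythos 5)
Your proof is correct, but it takes a genuinely different route from the paper. You work on the side of $G$: since SI graphs are bipartite, $L(G)$ being SI makes it triangle-free, and three edges at a common vertex of $G$ would give a triangle in $L(G)$, so $\Delta(G)\le 2$; then $G$ is a path or a cycle, and the finite check via $L(P_n)\cong P_{n-1}$, $L(C_n)\cong C_n$ leaves only $G=P_4$. The paper instead works on the side of $H=L(G)$: it argues that every SI graph other than $P_3$ contains an induced $K_{1,3}$ (essentially a vertex of degree at least $3$ together with three of its neighbours, which are pairwise non-adjacent by bipartiteness), and then invokes the fact that line graphs are claw-free (Harary, pp.~74--75) to conclude $L(G)=P_3$, hence $G=P_4$. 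Both arguments hinge on bipartiteness of SI graphs, but you use it inside $L(G)$ as triangle-freeness, while the paper uses it to manufacture the forbidden claw. Your version is more elementary and self-contained, needing only the standard isomorphisms for line graphs of paths and cycles plus the small-case discussion (your reliance on the convention that $K_1$ is not SI is consistent with the paper's statement that no SI graph of order $1$ exists, and $K_2$ fails outright since its endpoints have equal degree); the paper's version buys the slightly stronger byproduct that $P_3$ is the only SI graph that can be a line graph of any graph. Both arguments, like the paper, tacitly use the standing assumption that $G$ is connected.
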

\begin{proof}
	If $G=P_4$, then $L(G)=P_3$ which is SI. 
	
	To prove the converse, suppose $L(G)=H$ (say) is a SI graph. If  $L(G)=P_3$, we are done. 
	If possible let, $H \ne P_3$. Then we claim that $H$ has $K_{1,3}$ as induced subgraph. 
	
	Proof of the claim: Since H is SI, there are two cases:
	\begin{enumerate}
		\item If $H= K_{m,m+1}$, $m \ge 2$, then clearly it has $K_{1,3}$ as induced subgraph.
		\item If $H \ne K_{m,m+1}$, $m \ge 2$, then let the degree set of $H$ be $\lbrace m, m+1, ... , M\}$ with minimum degree $m \ge 1$ and maximum degree $M$. Now consider the induced subgraph by the set of a vertex of degree $k+1$ ($\ge 3$) where $m <k < M$ and its any $3$ adjacent vertices. We see that the induced subgraph forms $K_{1,3}$.  
	\end{enumerate}
	
	Therefore using the fact (\cite{Harary1969}, pp 74--75) that a graph having $K_{1,3}$ as induced subgraph can not be a line graph, we get a contradiction that $H$ is a line graph. 
	
	Hence the only possibility is  $L(G)=P_3$.
	
	This implies that $G=P_4$.
\end{proof}
Using the definition of line graph the following can be shown. 
\begin{theorem}
	Let $G$ be a SI graph. Then the degree of each vertex of the line graph $L(G)$ is an odd integer.
\end{theorem}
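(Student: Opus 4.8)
The plan is to read off the result directly from the line-graph degree formula combined with the defining parity property of SI graphs. First I would recall the fact already noted in the excerpt: every vertex of $L(G)$ corresponds to an edge $e=uv$ of $G$, and its degree is $d_{L(G)}(e)=d_G(u)+d_G(v)-2$. Thus it suffices to show that $d_G(u)+d_G(v)-2$ is odd for every edge $uv\in E(G)$.

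Next I would invoke the SI hypothesis: since $uv\in E(G)$, we have $d_G(u)=d_G(v)\pm 1$, so exactly one of $d_G(u)$ and $d_G(v)$ is even and the other is odd (this parity observation was already used in the first theorem of Section~\ref{Main Results}). Consequently the sum $d_G(u)+d_G(v)$ is odd.

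Finally, subtracting $2$ (an even integer) from an odd integer leaves an odd integer, so $d_{L(G)}(e)=d_G(u)+d_G(v)-2$ is odd. Since $e$ was an arbitrary edge of $G$, and the vertices of $L(G)$ are precisely the edges of $G$, every vertex of $L(G)$ has odd degree.

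I do not expect any real obstacle here: the statement follows in one line from the degree formula and a parity count. The only point deserving a word of care is to note explicitly that the correspondence ``vertices of $L(G)$ $\leftrightarrow$ edges of $G$'' is a bijection, so the parity computation genuinely covers \emph{all} vertices of $L(G)$; and implicitly that $G$ being SI has at least one edge, so $L(G)$ is nonempty and the claim is not vacuous.
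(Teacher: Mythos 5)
Your proof is correct and is exactly the argument the paper intends: the paper omits a written proof, remarking only that the statement follows from the definition of the line graph, and your parity computation via $d_{L(G)}(e)=d_G(u)+d_G(v)-2$ together with the fact that adjacent vertices in an SI graph have degrees of opposite parity is precisely that one-line argument. No gaps.
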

%In a similar manner it can be shown the following result for the total graph $T(G)$ of a graph $G$.

The total graph $T (G)$ of a graph $G$ is a graph whose vertex set is $V (G) \cup E(G)$ and two vertices in $T(G)$ are adjacent if and only if the corresponding elements are adjacent or incident in $G$ \cite{Harary1969}. If $u$ is a vertex of $G$, then $deg_{ T (G)} (u) = 2 deg_{G} (u)$. If $e = uv$ is an edge of $G$ then $deg_{ T (G)} (e) = deg_{G} (u) + deg_{G} (v)$.

Further more, it is noted that every edge in a graph $G$ (say) produces at least one triangle in $T(G)$. Therefore, the total graph of any graph is not bipartite. This implies the following lemmas. 

%The following lemma follows from the Theorem \ref{line graph not si}.

\begin{lemma} \label{total graph not si}
	Let $G$ be a SI graph, then $T(G)$, the total graph of $G$, is not SI. 
\end{lemma}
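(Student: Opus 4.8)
The plan is to use the structural fact, already recorded just above the statement, that every edge of $G$ produces a triangle in $T(G)$, together with the degree formulas for the total graph. The key observation is that a $SI$ graph must contain at least one edge (indeed at least two), so $T(G)$ contains at least one triangle, hence a subgraph $K_3$.

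First I would take any edge $e = uv$ of $G$. In $T(G)$ the three vertices $u$, $v$ and $e$ are pairwise adjacent (the vertex $u$ is incident to the edge $e$, the vertex $v$ is incident to the edge $e$, and $u$ and $v$ are adjacent in $G$), so they form a triangle. Next I would examine the degrees of these three vertices in $T(G)$: using $\deg_{T(G)}(u) = 2\deg_G(u)$, $\deg_{T(G)}(v) = 2\deg_G(v)$ and $\deg_{T(G)}(e) = \deg_G(u) + \deg_G(v)$. Since $G$ is $SI$, on the edge $uv$ we have $\deg_G(v) = \deg_G(u) \pm 1$. Then $|\deg_{T(G)}(u) - \deg_{T(G)}(v)| = 2|\deg_G(u) - \deg_G(v)| = 2 \ne 1$, so the edge between $u$ and $v$ in $T(G)$ already violates stepwise irregularity. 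This immediately shows $T(G)$ is not $SI$.

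Alternatively, and perhaps more in keeping with the paper's framing, I would simply invoke the preceding remark that the total graph of any graph is not bipartite, combined with the basic result (Lemma in Section~\ref{Basic results}) that every $SI$ graph is bipartite: since $T(G)$ contains an odd cycle (the triangle on $u$, $v$, $e$) it cannot be bipartite, hence cannot be $SI$. Either route gives the conclusion in one line once the triangle is exhibited.

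I do not expect any real obstacle here: the main content is already in the displayed remark preceding the statement, and the proof is a two-sentence deduction. The only thing to be slightly careful about is that the argument requires $G$ to actually have an edge; this is guaranteed because a connected $SI$ graph has order at least $3$ (by Theorem~\ref{existence of SI graph refined statement}, the admissible orders start at $3$) and any connected graph on at least two vertices has an edge.
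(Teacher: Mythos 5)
Your proposal is correct and, in its second route, is exactly the paper's argument: the paper proves the lemma purely by the remark that every edge of $G$ yields a triangle in $T(G)$, so $T(G)$ is not bipartite and hence not SI (since SI graphs are bipartite). Your first, degree-based computation on the triangle $u$, $v$, $e$ is a valid small variant but adds nothing beyond the bipartiteness argument the paper relies on.
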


\begin{lemma}
	Let $G$ be a graph, then the total graph $T(G)$ can never be SI.
\end{lemma}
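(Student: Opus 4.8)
The plan is to derive the statement from the lemma recalled in Section~\ref{Basic results} that every stepwise irregular graph is bipartite; by contraposition it is enough to show that $T(G)$ is not bipartite. I would therefore fix an edge $e = uv$ of $G$ (if $G$ has no edge then $T(G) = G$ is edgeless and the claim is degenerate), and exhibit an odd cycle in $T(G)$.

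Recall that $V(T(G)) = V(G) \cup E(G)$ and that two vertices of $T(G)$ are adjacent precisely when the corresponding elements of $G$ are adjacent or incident. Hence, in $T(G)$: the vertices $u$ and $v$ are adjacent (being adjacent in $G$); $e$ and $u$ are adjacent (being incident in $G$); and $e$ and $v$ are adjacent (being incident in $G$). So $u$, $v$, $e$ are pairwise adjacent and form a triangle $K_3 \subseteq T(G)$. A graph containing a triangle has an odd cycle and so is not bipartite; thus $T(G)$ is not bipartite, hence $T(G)$ is not SI.

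This is the same triangle argument that underlies Lemma~\ref{total graph not si}, the only difference being that here nothing is assumed about $G$ beyond the presence of a single edge. Accordingly there is no real obstacle: the sole thing needing a word is the edgeless boundary case, for which the triangle cannot be formed and the assertion is read vacuously; in all other cases the triangle $\{u, v, e\}$ finishes the proof immediately.
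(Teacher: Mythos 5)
Your proposal is correct and matches the paper's own reasoning: the paper derives this lemma from the observation that every edge of $G$ produces a triangle in $T(G)$, so $T(G)$ is not bipartite and hence not SI, exactly the triangle $\{u,v,e\}$ argument you give. Your explicit mention of the edgeless boundary case is a small extra precaution the paper omits, but it does not change the approach.
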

%\begin{proof}
%Let $ab \in E(G)$ where $deg_{G}(a) = n$ and $deg_{G}(b) = m.$ 
%The vertex '$a$' has $a_{1}, a_{2}, \ldots, a_{n}$ vertices as adjacent vertices. In the total graph the vertex a will be adjacent with the vertices $\{a, a_{1}\}, \{a, a_{2}\}, \ldots, \{a, a_{n}\}$. So, $deg_{T(G)}(a) = 2n.$ Similarlly, $deg_{T(G)}(b) = 2m$. Hence, for any graph $G$, the total graph $T(G)$ can never be a S.I graph.     
%\end{proof}

%We end the section by giving a relation between irregularity and total irregularity of a SI graph.
%\begin{theorem}
%Let $G$ be SI graph. Then $irr_t(G) \ge irr(G)$. The equality holds when $G= K_{m,m+1}$. 
%\end{theorem}
%\begin{proof}
%Let $G$ be a SI graph of order $n$ and $p$ edges with maximum degree $M$ and minimum degree $m$. Let $n_i$ denote the number of  vertices of degree $i$, $m \le i \le M$. Now we see that, 
%\begin{eqnarray*}
%2irr_t(G) & = & \sum\limits_{u, v \in V(G)} |d_G(u)-d_G(v)| \\ 
%& = & n_m(n_{m+1}+2n_{m+2}+ \cdots + (M-m)n_M)+ n_{m+1}(n_{m+2}+ \\
%& &  2n_{m+3}+ \cdots + (M-m-1)n_M) +n_{m+2}(n_{m+3}+ \\
%& &   2n_{m+4}+ \cdots + (M-m-2)n_M)+ \cdots + n_{M-1}.n_M  \\
%& \ge & n_mn_{m+1}+n_{m+1}n_{m+2}+ n_{m+2}n_{m+3}+ \cdots + n_{M-1}.n_M \\
%& \ge &  2p
%\end{eqnarray*}

%Hence $irr_t(G) \ge p = irr(G)$.
%\end{proof}
%\subsection{Irregularity and total irregularity of S. I graphs and their comparison}

In~\cite{Albertson}, Albertson claimed that the irregularity of a bipartite graph is maximum if it is a complete bipartite graph. Using this result we give a lower and upper bound for the number of edges of a SI graph of given order and show that both the bounds are tight. 
\begin{theorem}
	Let $G$ be a connected SI graph of order $n$ and $m$ be the number of edges of $G$. Then $m$ is even and $ n-1 \le m \le \frac{n^2-1}{4}$. Moreover, for odd $n$, the lower and upper bound is attained when $G$ is a SI tree graph and $K_{\frac{n-1}{2}, \frac{n+1}{2}}$ respectively. 
\end{theorem}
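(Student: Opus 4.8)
The plan is to verify the four assertions — the parity of $m$, the two inequalities, and the two sharpness claims — one at a time. That $m$ is even is exactly the first lemma recalled in Section~\ref{Basic results}, and $m \ge n-1$ is immediate because a connected graph on $n$ vertices has at least $n-1$ edges. The one preliminary observation worth recording is that, since $G$ is stepwise irregular, every edge contributes exactly $1$ to the Albertson index, so $irr(G) = m$; this is what makes Albertson's principle pertinent here.

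For the upper bound I would use that every SI graph is bipartite (the second lemma of Section~\ref{Basic results}). Writing the bipartition as $A \cup B$ with $|A| = a$ and $|B| = n - a$, every edge of $G$ joins $A$ to $B$, so $m \le a(n-a) \le \lfloor n^2/4 \rfloor$, the maximum of $a(n-a)$ over integers being attained at $a = \lfloor n/2 \rfloor$. When $n$ is odd, $\lfloor n^2/4 \rfloor = (n^2-1)/4$ and the bound is exactly as claimed. When $n$ is even this only gives $m \le n^2/4$; to sharpen it to $(n^2-1)/4$ I would note that $m = n^2/4$ forces $a = n/2$ and $G = K_{n/2,n/2}$, which is regular and therefore not stepwise irregular, whence $m < n^2/4$ and, $m$ being an integer, $m \le n^2/4 - 1 \le (n^2-1)/4$. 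Conceptually this is precisely where Albertson's observation sits: among bipartite graphs on $n$ vertices the complete bipartite graphs are the most irregular, so the edge-maximal SI graph is itself complete bipartite, which pins down the extremal example below.

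For the sharpness statements let $n$ be odd. For the upper bound, $K_{\frac{n-1}{2},\frac{n+1}{2}}$ has $n$ vertices and $\frac{n-1}{2}\cdot\frac{n+1}{2} = \frac{n^2-1}{4}$ edges, and every one of its edges joins a vertex of degree $\frac{n+1}{2}$ to a vertex of degree $\frac{n-1}{2}$ — a difference of exactly one — so it is a connected SI graph meeting the upper bound. For the lower bound, any SI tree on $n$ vertices has precisely $n-1$ edges; to produce such trees I would start from $P_3$, which is an SI tree of order $3$ possessing a pendant vertex, and apply Lemma~\ref{adding 4 vertices having same cyclomatic number} repeatedly — each application preserves the cyclomatic number $0$ and hence keeps the graph a tree — obtaining SI trees of every order $3 + 4k$.

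The inequalities themselves are routine; the delicate point is sharpness. The even-$n$ case of the upper bound genuinely needs the remark that the balanced complete bipartite graph is regular, and for the lower bound one must be a little careful, since SI trees do not exist for every order (for instance there is no SI tree of order $5$ or $9$, as a short degree-sequence count shows), so "attained by a SI tree" has to be read as applying to those odd $n$ admitting an SI tree. I would therefore state the lower-bound attainment with that proviso, or equivalently restrict it to $n \equiv 3 \pmod 4$.
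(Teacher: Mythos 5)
Your proof is correct and follows essentially the same route as the paper: use bipartiteness, note $irr(G)=m$ so Albertson's observation about complete bipartite graphs applies, and maximize $a(n-a)$ over bipartitions, with the extremal graph $K_{\frac{n-1}{2},\frac{n+1}{2}}$ for odd $n$ and SI trees giving the lower bound. If anything you are more careful than the paper on the even-$n$ case: the paper simply writes the bipartition as $\frac{n-k}{2}+\frac{n+k}{2}$ with $k\ge 1$, whereas you justify excluding the balanced case by observing that $K_{n/2,n/2}$ is regular, hence not SI. One small correction to your closing side remark: the odd orders admitting SI trees are not exactly those with $n\equiv 3\pmod 4$; for instance, the tree of order $21$ consisting of a centre of degree $4$, four neighbours of degree $3$, eight vertices of degree $2$ and eight leaves is SI, and $21\equiv 1\pmod 4$. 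This does not affect the theorem, since the attainment claim only asserts that equality holds when $G$ is an SI tree, and your $3+4k$ family already exhibits such trees.
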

\begin{proof}
	Let $G$ be a connected SI graph of order $n$ and $m$ edges. Since $G$ is connected, it must have $n-1$ edges. Therefore, $m \ge n-1$. And the equality holds for all SI tree graphs.  
	
	To prove the upper bound, since $G$ is SI, it is bipartite and the irregularity of $G$ is same as the number of edges of $G$. It is known that the irregularity of a bipartite graph is maximum if it is a complete bipartite graph. The maximum irregularity of $G$ can be obtained by partitioning $n$ vertices into two vertex sets in such a way that the number of edges is maximum. Let the partition of $n$ be $n= \frac{n-k}{2} +\frac{n+k}{2}$, for some $ k \ge 1$. The complete bipartite graph $K_{\frac{n-k}{2}, \frac{n+k}{2}}$ has $\frac{n^2-k^2}{4}$ edges.  Thus  $m= irr(G) \le \frac{n^2-k^2}{4} \le \frac{n^2-1}{4}$. 
	
	The equality holds if $n$ is odd and $k=1$.  In this case we get $G=K_{\frac{n-1}{2}, \frac{n+1}{2}}$ which is SI. Therefore for complete bipartite SI graph $K_{\frac{n-1}{2}, \frac{n+1}{2}}$, the equality holds.

\end{proof}
%\textbf{Q: Characterize all the S.I graphs $G$ such that $irr(G) = 2$. $P_{3}$}

%\begin{theorem}
%If $G$ is a S.I graph, then $irr(G) \leq irr(S(G))$, $irr(G)$ and $irr(L(G))$ are incomparable, $irr(G) < irr(T(G))$.

%\end{theorem}
%\begin{proof}

%\end{proof}

%\begin{theorem}
%If $G$ is a S. I graph, then $irr_{t}(G) < irr_{t}(S(G))$, $irr_{t}(G)$ and $irr_{t}(L(G))$ are incomparable, $irr_{t}(G) < irr_{t}(T(G)$. 
%\end{theorem}
{\bf Note: } By the definition of SI graphs, it is clear that the disjoint union of Si graphs is SI. The join of two graphs $G$ and $H$, denoted by $G+H$, is the graph obtained by $V(G+H)= V(G)\cup V(H)$ and $E(G+H)= E(G)\cup E(H)\cup \lbrace uv : u \in V(G), v \in V(H) \rbrace$. Using the definition of join of two graphs it can be shown that for any two non trivial graphs $G$ and $H$, the join $G+H$ is not bipartite. Hence the join can never be SI graph unless $G= \overline{K}_{m}$, $H= \overline{K}_{m+1}$. 

Product graphs are very useful to construct large graphs from small graphs and has been used to design interconnection networks. Some of products of two graphs  are lexicographic, direct, Cartesian and strong product~\cite{imrich2000product}. Among these products, lexicographic and strong product of two non trivial graphs is not bipartite, therefore not SI. On the other hand, it can be shown that the direct and Cartesian product of two graphs need not be SI. 

\section{Acknowledgements}
The first author S. Bera gratefully acknowledges support by National Natural Science Foundation of China (61320106005 and 61772214) and the Innovation Scientists and Technicians Troop Construction Projects of Henan Province (154200510012).
%\section*{References}
\bibliographystyle{spmpsci}

\bibliography{mybib}

\begin{thebibliography}{10}
\providecommand{\url}[1]{{#1}}
\providecommand{\urlprefix}{URL }
\expandafter\ifx\csname urlstyle\endcsname\relax
  \providecommand{\doi}[1]{DOI~\discretionary{}{}{}#1}\else
  \providecommand{\doi}{DOI~\discretionary{}{}{}\begingroup
  \urlstyle{rm}\Url}\fi

\bibitem{dimitovmaximalirregularity}
Abdo, H., Cohen, N., Dimitrov, D.: Graphs with maximal irregularity.
\newblock Faculty of sciences and mathematics university of NIS FILOMAT
  \textbf{28}(7), 1315--1322 (2014)

\bibitem{liuhighlyirregular}
Alavi, Y., Liu, J.: Highly irregular digraphs.
\newblock Discrete mathematics \textbf{111}, 3--10 (1993)

\bibitem{Albertson}
Albertson, M.O.: The irregularity of a graph.
\newblock Ars combinatoria \textbf{46}, 219--225 (1997)

\bibitem{Reginocentrality}
Criado, R., Flores, J., del Amo, A.G., Romance, M.: Centralities of a network
  and its line graph: an analytical comparison by means of their irregularity.
\newblock International Journal of Computer Mathematics \textbf{91}(2),
  304--314 (2014)

\bibitem{Gutmansecondzagrebindices}
Das, K.C., Gutman, I.: Some properties of the second zagreb index.
\newblock Match Communications in Mathematical and in computer chemistry
  \textbf{52}, 103--112 (2004)

\bibitem{dosliczagrebindices}
Doˇsli´c, T., Furtula, B., Graovac, A., Gutman, I., Moradid, S., Yarahmadie,
  Z.: On vertex degree based molecular structure descriptors.
\newblock Match Communications in Mathematical and in computer chemistry
  \textbf{66}, 613--626 (2011)

\bibitem{estradabiomolecular}
Estrada, E.: Randic index, irregularity and complex biomolecular networks.
\newblock Acta Chimica Slovenica \textbf{57}, 597--603 (2010)

\bibitem{fath-tabarzagrebindices}
Fath-Tabar, G.H.: Old and new zagreb indices of graphs.
\newblock Match Communications in Mathematical and in computer chemistry
  \textbf{65}, 79--84 (2011)

\bibitem{SIgraph}
Gutman, I.: Stepwise irregular graphs.
\newblock Applied mathematics and Computation \textbf{325}, 234--238 (2018)

\bibitem{Gutmanfirstzagrebindices}
Gutman, I., Das, K.C.: The first zagreb index 30 years after.
\newblock Match Communications in Mathematical and in computer chemistry
  \textbf{50}, 83--92 (2004)

\bibitem{Gutmanextremalgraph}
Gutman, I., Hansen, P., , Mélot, H.: Variable neighborhood search for extremal
  graphs. 10. comparison of irregularity indices for chemical trees.
\newblock Journal of chemical information and modeling \textbf{45}(2), 222--230
  (2005)

\bibitem{Harary1969}
Harary, F.: Graph Theory.
\newblock Addison-Wesley, Reading, MA (1969)

\bibitem{henningbipartiteirregular}
Henninga, M.A., Rautenbach, D.: On the irregularity of bipartite graphs.
\newblock Discrete mathematics \textbf{307}, 1467--1472 (2007)

\bibitem{Zofiadegreesequence}
Majcher, Z., Michael, J.: Degree sequences of highly irregular graphs.
\newblock Discrete mathematics \textbf{164}, 225--236 (1997)

\bibitem{imrich2000product}
Wilfried, I., Sandi, K.: Product graphs: structure and recognition.
\newblock Wiley (2000)

\end{thebibliography}

%\begin{acknowledgements}
%If you'd like to thank anyone, place your comments here
%and remove the percent signs.
%\end{acknowledgements}

% BibTeX users please use one of
%\bibliographystyle{spbasic}      % basic style, author-year citations
%\bibliographystyle{spmpsci}      % mathematics and physical sciences
%\bibliographystyle{spphys}       % APS-like style for physics
%\bibliography{}   % name your BibTeX data base

% Non-BibTeX users please use
%\begin{thebibliography}{}
%
% and use \bibitem to create references. Consult the Instructions
% for authors for reference list style.
%
%\bibitem{RefJ}
% Format for Journal Reference
%Author, Article title, Journal, Volume, page numbers (year)
% Format for books
%\bibitem{RefB}
%Author, Book title, page numbers. Publisher, place (year)
% etc
%\end{thebibliography}

\end{document}